\documentclass[11pt]{amsart}

\usepackage[margin=1in]{geometry}
\usepackage{amsmath,amssymb,amsthm,mathtools}
\usepackage{enumitem}
\usepackage{microtype}
\usepackage[hidelinks]{hyperref}
\usepackage{float}

\newtheorem{theorem}{Theorem}[section]
\newtheorem{proposition}[theorem]{Proposition}
\newtheorem{lemma}[theorem]{Lemma}
\newtheorem{corollary}[theorem]{Corollary}
\newtheorem{remark}[theorem]{Remark}
\newtheorem{definition}[theorem]{Definition}

\newcommand{\Mbar}{\overline{\mathcal M}}
\newcommand{\Mbarzn}{\overline{\mathcal M}_{0,n}}

\newcommand{\QQ}{\mathbb Q}

\newcommand{\PP}{\mathbb{P}} 
\newcommand{\cM}{\mathcal{M}} 
\newcommand{\bcM}{\overline{\cM}}
\newcommand{\bt}{\tilde{b}}
\newcommand{\Pt}{\widetilde{P}}
\newcommand{\Ft}{\widetilde{F}}

\usepackage{pgfplots}
\pgfplotsset{compat=1.18}

\title{Real-Rootedness of the Poincar\'e polynomials of $\overline{\mathcal M}_{0,n}$:\\
an AI-assisted proof}
\author{Gergely B\'erczi and Young-Hoon Kiem}
\date{}
\address{Department of Mathematics, Aarhus University, Ny Munkegade 118, 8000 Aarhus, Denmark}
\email{gergely.berczi@math.au.dk}
\address{School of Mathematics, Korea Institute for Advanced Study, 85 Hoegiro, Dongdaemun-gu, Seoul 02455, Korea}
\email{kiem@kias.re.kr}

\begin{document}
\begin{abstract}
We prove real-rootedness for the Poincar\'e polynomial
\[
  P_n(t)=\sum_{i=0}^{n-3} \dim H^{2i}(\Mbarzn;\QQ)t^i
\]
of the Deligne--Mumford moduli space $\Mbarzn$ of stable
$n$-pointed rational curves, proving a conjecture of Aluffi--Chen--Marcolli.

The proof starts from the Keel--Manin--Getzler recurrence for these polynomials, but its main new idea is the introduction of a bivariate deformation $F_m(y,t)$
of the Poincar\'e polynomial. This deformation is the heart of the argument: it reveals a hidden interlacing structure which is not visible from the one-variable recurrence alone.  For fixed $t<0$, the zero set of
$F_m$ in the $y$-direction is governed by a Sturm--Rolle argument on the
interval $0<y<1-t$.  The original polynomial $P_n(t)$ is recovered by the
slice $y=1$, and the ordered crossings of the moving roots through this
slice give both real-rootedness and strict interlacing.  As a consequence,
the Betti numbers of $\Mbarzn$ form an ultra-log-concave sequence.  
We further show the real-rootedness and ultra-log-concavity 
for the Poincar\'e polynomial of the Fulton-MacPherson space $\PP^1[n]$ of 
$n$ ordered points in (degenerations of) the complex projective line $\PP^1$.

The proof of the real-rootedness theorem for the Poincar\'e polynomials of
$\overline{\mathcal M}_{0,n}$ was obtained through an iterative
AI-assisted workflow with Co-Mathematician, an agentic frontier-model system
developed by Google DeepMind \cite{ZhengEtAl2026}.  Our role was to formulate the problem, evaluate the proposed proof attempts, identify gaps and request corrections, compare the developing argument with the literature, and refine the presentation of the final proof.
Our additional human contribution was to observe that a similar, residual  deformation strategy
also applies to the Fulton--MacPherson spaces $\mathbb P^1[n]$, which yields the corresponding real-rootedness theorem.

\end{abstract}

\maketitle

\section{Introduction}

The Deligne--Mumford moduli spaces are central geometric objects of
modern algebraic geometry.  Deligne and Mumford introduced stable curves and their
moduli space $\Mbar_g$ in their 1969 paper  \cite{DeligneMumford1969}.  The pointed compactifications
$\Mbar_{g,n}$, including the genus-zero spaces considered in the present paper, were constructed
in the Grothendieck--Knudsen theory of stable pointed curves
\cite{Knudsen1983,Knudsen1983III}.  The space $\Mbarzn$ is a smooth projective variety of
dimension $n-3$ compactifying configurations of $n$ ordered points on the complex projective line 
$\mathbb P^1$ modulo projective equivalence.
The importance of these moduli spaces comes from their rich geometry which has led to several complementary discoveries.
First, the boundary, forgetful, and clutching morphisms relating the spaces
$\Mbar_{g,n}$ organize them into the basic geometric framework for stable
curve theory.  In particular, the so-called clutching morphisms encode the fundamental
operation of gluing pointed curves along marked points, an operation which gives
the collection of moduli spaces its operadic, and in higher genus modular
operadic, structure \cite{GetzlerKapranov1998}. In genus zero, Keel
computed the Chow ring and intersection theory of $\Mbarzn$, proving in
particular the recursive structure of its Chow groups and Betti numbers
\cite{Keel1992}.  Kapranov then related $\Mbarzn$ to explicit birational and
combinatorial models, including iterated blow-ups, Veronese curves and Chow
quotients of Grassmannians \cite{KapranovChow1993}.  In a different
direction, Witten's conjectural description of intersection numbers on moduli
spaces of curves through two-dimensional gravity, and Kontsevich's proof via the
matrix Airy function, made $\Mbar_{g,n}$ a central bridge between algebraic
geometry, integrable systems and mathematical physics
\cite{Witten1991,Kontsevich1992}.  Kontsevich--Manin's formulation of
Gromov--Witten classes and quantum cohomology then placed stable curves and
stable maps at the foundation of modern enumerative geometry
\cite{KontsevichManin1994}.

\bigskip

In this paper we study Betti numbers of $\Mbarzn$, encoded by the  Poincar\'e polynomial (in half-degree
convention)
\begin{equation}\label{eqn:defP}
     P_n(t)=\sum_{i=0}^{n-3} b_{n,i}t^i,
  \qquad b_{n,i}=\dim H^{2i}(\Mbarzn;\QQ).
\end{equation}
 
The main result is a fundamental new structural property of the
topology of these moduli spaces: for every $n\ge 3$, the Poincar\'e polynomial
$P_n(t)$ has only real roots. 
The proof was found with the assistance of Co-Mathematician, a frontier agentic LLM-based 
system for mathematical research introduced in \cite{ZhengEtAl2026}. 

\begin{theorem}\label{thm:main}
For every $n\geq 3$, the polynomial $P_n(t)$ has only real roots.  More
precisely, for $n\geq 4$ it has $n-3$ simple roots, all lying in
$(-\infty,0)$.
\end{theorem}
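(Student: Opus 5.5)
We will start from Keel's recurrence for the Betti numbers, in Manin's generating-function form. Write $\varphi(x,t)=x+\sum_{n\ge3}P_n(t)\,x^{n-1}/(n-1)!$. Then Keel--Manin--Getzler give a first-order functional/differential equation for $\varphi$. Equivalently, there is a recurrence expressing $P_{n+1}$ through $P_n$, $tP_n$ and convolution sums $\sum\binom{n}{k}P_{k+1}P_{n-k+1}$.

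One convenient closed form is Manin's equation $(1+t)\varphi=t\,(1+\varphi)\log(1+\varphi)+x$, or a close variant. Differentiating it in $x$ gives
\[
\partial_x\varphi=\frac{1+\varphi}{(1+t)(1+\varphi)-t(1+\varphi)-t(1+\varphi)\log(1+\varphi)}.
\]
We would use this as the engine of the induction.

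The strategy is to embed $P_n$ into a bivariate family $F_m(y,t)$, polynomial in $y$ with $F_m(1,t)=P_{m+3}(t)$ (up to a positive factor). The family should satisfy a \emph{first-order} recurrence of the form
\[
F_{m+1}(y,t)=A(y,t)\,F_m(y,t)+B(y,t)\,\partial_yF_m(y,t),
\]
which the convolution-type Keel recurrence lacks. Concretely, take $\varphi$ as a function of $x$ and introduce $y$ via the substitution $u=1+\varphi$. The $n$-th $x$-derivative of $\varphi$ is then a polynomial in $u$ and $(\text{denominator})^{-1}$. Define $F_m$ to be the numerator after clearing powers of the denominator. Iterating $d/dx=\partial_x\varphi\cdot d/du$ produces exactly a recurrence of the above type, with $A,B$ explicit low-degree polynomials.

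The bound $0<y<1-t$ in the abstract suggests that, for fixed $t<0$, the factor $B$ vanishes at $y=0$ and $y=1-t$ and has constant sign in between. The first substantive step is to determine $A,B$ explicitly, check that $F_m(1,t)$ recovers $P_{m+3}$, and verify these sign properties for $t<0$.

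Next we run an induction in $m$ on the following claim: for every fixed $t<0$, $F_m(\cdot,t)$ has exactly $m$ (or the appropriate number of) simple zeros in $(0,1-t)$, with prescribed signs at the endpoints. The proof is Sturm--Rolle style. Between consecutive zeros of $F_m$, Rolle gives a zero of $\partial_yF_m$. The operator $A F+B\partial_y F$ evaluated at consecutive zeros of $F_m$ equals $B\,\partial_yF_m$, which alternates in sign because $B$ has constant sign on the interval. This produces interlacing zeros of $F_{m+1}$, and the endpoint signs supply the one extra zero. Equivalently, $F_{m+1}=B\,e^{-\int A/B}\,\partial_y\!\left(e^{\int A/B}F_m\right)$, so one applies Rolle to a weighted function vanishing at the endpoints.

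The root curves $y=\rho_{m,j}(t)$ then depend continuously (indeed analytically) on $t<0$, and are ordered and non-crossing by simplicity. We would then check the limits:
\begin{itemize}
\item as $t\to0^-$, the roots approach $0$ or are controlled by $P_n(0)=1>0$;
\item as $t\to-\infty$, they fill out the interval and exceed $1$.
\end{itemize}
From this, each curve crosses the slice $y=1$ exactly once. Each crossing is a negative root of $P_n(t)$, which gives $n-3$ distinct negative roots, and that is all of them since $\deg P_n=n-3$.

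The main obstacle is twofold. First, finding the correct deformation so that the recurrence really is first-order in $\partial_y$ with $B$ of constant sign on $(0,1-t)$. Second, proving that each root curve crosses $y=1$ exactly once and transversally, rather than oscillating. For the crossing, we would first try to prove monotonicity of $\rho_{m,j}(t)$ in $t$ via implicit differentiation, $\rho'=-\partial_tF/\partial_yF$, showing $\partial_tF_m$ has a fixed sign at the roots by a companion induction using the same recurrence. Failing that, we would count roots of $F_m(1,t)$ by the intermediate value theorem along a path and compare with the degree bound.
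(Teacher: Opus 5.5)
There is a genuine gap: the outline follows the paper's plan, but it never supplies the deformation that makes the plan work. The shared plan is a bivariate family with a first-order recurrence $F_{m+1}=AF_m+B\,\partial_yF_m$, a weighted Sturm--Rolle induction at fixed $t<0$, continuous ordered root branches, endpoint analysis, and a crossing count against $\deg P_n$. The missing piece is the deformation together with its slice identity, and your recipe for it does not work as stated.

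The equation $(1+t)\varphi=t(1+\varphi)\log(1+\varphi)+x$ is false for general $t$. It gives $\varphi=x+t\,x^2/2+\cdots$, whereas the coefficient of $x^2/2!$ must be $P_3=1$; it only holds in the $t=1$ (Euler characteristic) case. The correct relation is Getzler's $x=\bigl(t^2\varphi-(1+\varphi)^t+1\bigr)/\bigl(t(t-1)\bigr)$, so $\partial_x\varphi=(t-1)/\bigl(t-(1+\varphi)^{t-1}\bigr)$. Even then, the iterated derivatives are not polynomials in $u=1+\varphi$; the variable that makes the iteration close up is $s=u^{t-1}$.

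With these corrections your idea does work. By induction,
\[
  \partial_x^n\varphi=(t-1)^n\,u^{1-n}\,(t-s)^{1-2n}\,Q_n(s),
\]
where $Q_1=1$ and
\[
  Q_{n+1}=\bigl[(2n-1)(t-1)s-(n-1)(t-s)\bigr]Q_n+(t-1)s(t-s)\,Q_n'(s).
\]
The M\"obius change $y=(t-1)s/(t-s)$ sends $0,1,\infty$ to $0,1,1-t$ and maps $(0,\infty)$ increasingly onto $(0,1-t)$ for $t<0$. It turns $(t-s)^{1-n}Q_n(s)$ into exactly the paper's $F_n(y,t)$, defined by $F_{m+1}=(my-m+1)F_m+y(y+t-1)\partial_yF_m$. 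Evaluating at $x=0$ then gives $F_n(1,t)=P_{n+1}(t)$ in a few lines. Taking Getzler's formula as known, this is a shorter route to the slice identity than the paper's characteristics computation. But until $A$, $B$ and the slice identity are actually written down, the sign structure your Sturm induction relies on is only hoped for.

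Two further steps need real arguments.

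First, the claim that as $t\to-\infty$ the roots ``fill out the interval and exceed $1$'' is not automatic. The interval $(0,1-t)$ grows, but nothing you say stops the smallest root from staying bounded. The paper rescales: $H_{m,t}(x)=t^{-(m-1)}F_m(tx,t)\to G_m(x)$ coefficientwise, with $G_{m+1}=mxG_m+x(x+1)G_m'$. A second Sturm--Rolle argument (weight $(1+x)^m$) shows that $G_m$ has a simple root at $0$ and its other roots are simple and lie in $(-1,0)$. Simplicity at $0$ is exactly what rules out $r_i(t)/t\to0$, so $r_i(t)/t\to\alpha_i\in(-1,0)$ and $r_i(t)\to+\infty$. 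Your $t\to0^-$ step is fine once phrased by compactness: a root $\ge1$ would converge to $1$, contradicting $F_m(1,0)=P_{m+1}(0)=1$.

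Second, drop the monotonicity plan. It is unnecessary, and in the paper's normalization it is false. The lowest branch of $F_5$ decreases from about $0.338$ near $t=0^-$ to about $0.334$ near $t=-0.2$, then increases (about $0.345$ at $t=-1$). So $\partial_tF_5$ has no fixed sign along that branch, and no companion induction can give one.

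Your fallback is precisely the paper's argument and needs no transversality. Each of the $n-3$ branches crosses $y=1$ by the intermediate value theorem. Two branches cannot cross at the same $t$, because the roots at fixed $t$ are distinct. Hence $P_n$ has at least $n-3$ distinct negative zeros, and $\deg P_n=n-3$ forces exactly one crossing per branch and simple roots.
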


We also obtain strict interlacing.

\begin{theorem}[Strict interlacing]\label{thm:interlacing-intro}
Let $n\geq 4$, and let
\[
  \tau_1<\tau_2<\cdots<\tau_{n-3}<0
\]
be the roots of $P_n(t)$.  If
\[
  \sigma_1<\sigma_2<\cdots<\sigma_{n-2}<0
\]
are the roots of $P_{n+1}(t)$, then
\[
  \sigma_1<\tau_1<\sigma_2<\tau_2<\cdots<
  \sigma_{n-3}<\tau_{n-3}<\sigma_{n-2}<0.
\]
\end{theorem}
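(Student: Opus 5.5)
The plan is to deduce strict interlacing from the same bivariate deformation $F_m(y,t)$ that gives Theorem~\ref{thm:main}, by a sign-alternation count. The inputs I take as established are the following.
\begin{enumerate}[label=(\roman*)]
\item For fixed $t<0$, the zeros of $y\mapsto F_m(y,t)$ in $0<y<1-t$ are simple. They depend continuously, indeed analytically by the implicit function theorem, on $t$, giving branches $y_1^{(m)}(t)<\dots<y_{r}^{(m)}(t)$.
\item The slice $F_m(1,t)$ recovers $P_n(t)$ up to a nonvanishing factor.
\item The roots $\tau_k$ of $P_n$ are exactly the values of $t$ at which these branches cross $y=1$, one crossing per branch and in order.
\end{enumerate}
The passage $m\mapsto m+1$ comes from the Keel--Manin--Getzler recurrence, lifted to the deformation. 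I expect this lift to be a first-order relation in $y$ of Sturm--Rolle type: $F_{m+1}(\cdot,t)$ is a combination of $F_m(\cdot,t)$ and $\partial_yF_m(\cdot,t)$ with coefficients of fixed sign on $0<y<1-t$. Consequently the zeros of $F_{m+1}(\cdot,t)$ in $y$ strictly interlace those of $F_m(\cdot,t)$ for each fixed $t<0$.

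The main step is to compute the sign of $P_{n+1}(\tau_k)$ for each $k=1,\dots,n-3$. At $t=\tau_k$ the $y$-root branch $y_k^{(m)}$ sits exactly at $y=1$. By the fixed-$t$ interlacing, the branches of $F_{m+1}(\cdot,\tau_k)$ satisfy
\[
y^{(m+1)}_k(\tau_k)<1<y^{(m+1)}_{k+1}(\tau_k).
\]
Hence $y=1$ lies in the $k$-th gap of the zero set of $F_{m+1}(\cdot,\tau_k)$, and so
\[
\operatorname{sign}F_{m+1}(1,\tau_k)=\varepsilon\,(-1)^k,
\]
with $\varepsilon$ independent of $k$: it is fixed by the sign of $F_{m+1}$ near $y=0$ or $y=1-t$. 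Equivalently, in one-variable terms, I would show $P_{n+1}(\tau_k)$ has the sign of $-P_n'(\tau_k)$ times a positive quantity. Since the $\tau_k$ are simple roots (Theorem~\ref{thm:main}), $P_n'(\tau_k)$ alternates in sign, which gives the same conclusion. I would try to prove this directly as well, by substituting $t=\tau_k$ into the recurrence and identifying the surviving terms with a positive multiple of $\partial_yF_m(1,\tau_k)$.

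The count then finishes the proof.
\begin{itemize}
\item At the right end, $P_{n+1}(0)=1>0$.
\item At the left end, $P_{n+1}$ has positive leading coefficient and degree $n-2$, so $\operatorname{sign}P_{n+1}(t)=(-1)^{n-2}$ as $t\to-\infty$.
\item Together with the alternating signs at $\tau_1<\dots<\tau_{n-3}$, the intermediate value theorem gives a root of $P_{n+1}$ in each of the $n-2$ open intervals $(-\infty,\tau_1),(\tau_1,\tau_2),\dots,(\tau_{n-3},0)$.
\end{itemize}
This accounts for all $n-2$ roots, so each interval contains exactly one, which is the displayed chain of strict inequalities. The inequalities are strict because the $\tau_k$ are not roots of $P_{n+1}$, since $P_{n+1}(\tau_k)\neq0$ by the sign computation. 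To make this consistent, one checks that $\varepsilon$ is compatible with $P_{n+1}(\tau_{n-3})$ and $P_{n+1}(0)$ having opposite signs; this is the normalization $\varepsilon=(-1)^{n-2}$.

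The step I expect to be the main obstacle is making the fixed-$t$ interlacing in $y$ rigorous on the full interval $0<y<1-t$. One must show that no root of $F_{m+1}(\cdot,t)$ escapes through the endpoints $y=0$ or $y=1-t$, and that the coefficient in front of $\partial_yF_m$ does not vanish inside the interval. I would first try to handle this by evaluating $F_m$ and $F_{m+1}$ explicitly at $y=0$ and $y=1-t$, where the deformation should degenerate to something elementary such as a power of $(1+t)$ or a constant. That yields endpoint signs of constant type, and the Rolle argument then counts the roots in between.
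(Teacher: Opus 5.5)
Your argument is correct. It differs from the paper's in which object carries the sign of $P_{n+1}(\tau_k)$.

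\textbf{The paper's route.} The paper factors $F_{n-1}(\cdot,\tau_i)$ at a single level to get $\operatorname{sgn}\partial_yF_{n-1}(1,\tau_i)=(-1)^{d-i}$. It then transfers this to $P_{n+1}$ through the slope identity \eqref{eq:slope} and the Keel recurrence: $P_{n+1}(\tau_i)=\tau_iS_{n+1}(\tau_i)=\tau_i\,\partial_yF_{n-1}(1,\tau_i)$.

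\textbf{Your route.} You use the fixed-$t$ interlacing of the positive $y$-zeros of $F_m$ and $F_{m+1}$, with $m=n-1$. This is exactly what the Rolle step in Proposition~\ref{prop:fixedt} produces: one zero of $F_{m+1}$ in each of $(0,r_1),\dots,(r_{m-2},1-t)$, hence exactly one by degree. So $y=1=r_k(\tau_k)$ lies in the $k$-th gap. The positive leading coefficient $(2m-1)!!$ then gives $\operatorname{sgn}F_{m+1}(1,\tau_k)=(-1)^{m-1-k}$, and the slice identity at level $m+1$ identifies this value with $P_{n+1}(\tau_k)$. In particular $\varepsilon=(-1)^{n-2}$ is an output of the computation, not a normalization to be checked afterwards.

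\textbf{What each buys.} Your route needs only \eqref{eq:slice} at two consecutive levels and never uses \eqref{eq:slope} or the convolution $S_{n+1}$. The paper's route needs the root configuration at only one level but relies on the slope identity. The two computations give the same number: \eqref{eq:Frec} at $y=1$ reads $F_{m+1}(1,t)=F_m(1,t)+t\,\partial_yF_m(1,t)$, so $P_{n+1}(\tau_k)=\tau_k\,\partial_yF_m(1,\tau_k)$.

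\textbf{Small points to fix.}
\begin{enumerate}
\item The ``in order'' part of your input (iii) is not contained in Theorem~\ref{thm:main}. You need the short argument the paper supplies: at $\tau_i$ the branch $r_j$ with $j>i$ lies above $1$, and it lies below $1$ near $0^-$, so its unique crossing lies in $(\tau_i,0)$.
\item Your one-variable aside has a sign slip. The surviving term is the \emph{negative} multiple $\tau_k\,\partial_yF_m(1,\tau_k)$. A positive multiple would have the sign of $+P_n'(\tau_k)$, contradicting your own claim.
\item The endpoint issue you flag is handled in the paper by the weight $w_m$ of \eqref{eq:weight}, whose exponents are positive. Your explicit-evaluation alternative also works. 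Here $F_m(1-t,t)=\prod_{j=1}^{m-1}(1-jt)>0$, which is not a power of $1+t$, and $F_m(y,t)\sim\prod_{j=2}^{m-1}(t-j)\,y$ as $y\to0^+$. Combine these with the alternation of $F_{m+1}(r_i)=-r_i(a-r_i)\,\partial_yF_m(r_i)$.
\end{enumerate}
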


Note that odd cohomology vanishes, and $P_n$ is palindromic by Poincar\'e duality.  Hard
Lefschetz gives unimodality of the coefficients.  Since the coefficients are
nonnegative, the roots are necessarily negative; Newton's inequalities then give
log-concavity, indeed ultra-log-concavity, of the Betti number sequence. 

\begin{corollary}[Log-concavity of the Betti numbers]\label{cor:log-concavity}
For every $n\ge 3$, the Betti numbers of $\Mbarzn$ form an ultra-log-concave sequence:
\[
   \left(\frac{b_{n,i}}{\binom{n-3}{i}}\right)^2\geq \frac{b_{n,i-1}}{\binom{n-3}{i-1}} \frac{b_{n,i+1}}{\binom{n-3}{i+1}}
   \qquad
   \text{for all } 1\le i\le n-4 .
\]
\end{corollary}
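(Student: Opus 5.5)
The corollary follows from Theorem~\ref{thm:main} by Newton's inequalities; the only work is to check the hypotheses and handle the small cases. For $n=3$ and $n=4$ the range $1\le i\le n-4$ is empty, so the statement is vacuous. Assume $n\ge 5$ and set $d=n-3$. By Theorem~\ref{thm:main}, $P_n(t)$ has degree $d$ with real roots $\tau_1,\dots,\tau_d$. The leading coefficient is $b_{n,d}=\dim H^{2d}(\Mbarzn;\QQ)=1$, so we can write $P_n(t)=\prod_{j=1}^d (t-\tau_j)$.

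Newton's inequalities state the following. Let $Q(t)=\sum_{i=0}^d a_i t^i$ be any real polynomial of degree $d$ all of whose roots are real, and put $e_i=a_i/\binom{d}{i}$. Then $e_i^2\ge e_{i-1}e_{i+1}$ for $1\le i\le d-1$. I would cite this as classical. If a self-contained argument is wanted, I would sketch the standard proof. Real-rootedness is preserved under differentiation by Rolle's theorem, and under the reversal $t^dQ(1/t)$ when $a_0\neq0$. Applying $i-1$ derivatives, then reversing, then $d-i-1$ further derivatives reduces $Q$ to a real-rooted quadratic. Its coefficients are proportional to $e_{i-1},2e_i,e_{i+1}$ with a common positive factor. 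The discriminant condition for that quadratic is exactly $e_i^2\ge e_{i-1}e_{i+1}$.

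In our case $a_0=b_{n,0}=1\neq 0$, since $\Mbarzn$ is connected. Applying Newton's inequalities to $Q=P_n$ with $a_i=b_{n,i}$ gives the displayed inequality for all $1\le i\le n-4$.

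No step is a genuine obstacle. The one point to verify is that the reversal step in the sketch needs $a_0\ne0$, which is why I check $b_{n,0}=1$. I would also remark on two consequences of the negativity of the roots from Theorem~\ref{thm:main}. First, every $b_{n,i}$ is strictly positive, since the coefficients of $\prod_j(t-\tau_j)$ with all $\tau_j<0$ are elementary symmetric functions of the positive numbers $-\tau_j$. So the sequence has no internal zeros. Second, the inequality can be divided through to give the ratio form of ultra-log-concavity.
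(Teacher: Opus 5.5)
Your proof is correct and matches the paper's: the paper likewise deduces the corollary from the real-rootedness in Theorem~\ref{thm:main} plus Newton's inequalities, using positivity of the coefficients (from cohomology or the recurrence; you get it from the negativity of the roots). One small slip in your sketch: the reversal is applied to $Q^{(i-1)}$, whose constant term is $(i-1)!\,a_{i-1}$ rather than $a_0$, and reversal preserves real-rootedness with no nonvanishing hypothesis anyway, so checking $b_{n,0}=1$ is not what that step actually needs; this does not affect the conclusion.
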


A closely related space to $\Mbarzn$ is the Fulton--MacPherson space $\PP^1[n]$ which compactifies the configuration space of $n$ ordered distinct points in $\PP^1$ \emph{without} projective equivalence. It is a smooth projective variety of dimension $n$ with vanishing odd cohomology and its even Betti sequence is unimodal and palindromic. By a residual variant of the same deformation strategy as in the proof of Theorem \ref{thm:main}, we further prove the real-rootedness (Theorem \ref{thm:FM}) and the ultra-log-concavity (Corollary \ref{cor:FM}) for the Poincar\'e polynomial of $\PP^1[n]$ for $n\ge 1$. 

\bigskip

We next recall the precise origin of the problem.  Aluffi--Chen--Marcolli
formulated the real-rootedness conjecture for the polynomial representing the
Grothendieck class of $\Mbarzn$; in the Poincar\'e specialization this is exactly
the assertion that $P_n(t)$ has only real zeros \cite[Conjecture~1]{AluffiChenMarcolli2025}.
They proved asymptotic ultra-log-concavity and $\gamma$-positivity as evidence
for the conjecture.  
Nascimento improved the asymptotic ultra-log-concavity bounds
\cite{Nascimento2025}.  Aluffi--Marcolli--Nascimento gave explicit formulas for
the Grothendieck class and Betti numbers in terms of Stirling and Bernoulli
numbers, and verified related log-concavity phenomena in large finite ranges
\cite{AluffiMarcolliNascimento2024}.  Choi--Kiem--Lee first gave closed formulas for the $\mathfrak S_n$-characters on
cohomology, then developed recursive algorithms for these representations and
proposed strong equivariant log-concavity refinements of the Betti number
log-concavity problem; their later work introduced characteristic polynomials of
representations and further asymptotic log-concavity results
\cite{ChoiKiemLee2023,ChoiKiemLee2024,ChoiKiemLee2025}.  Choi--Kiem later proved
that the distribution of the Betti numbers of $\Mbarzn$ is asymptotically Gaussian and
obtained a stronger asymptotic ultra-log-concavity \cite{ChoiKiem2026}.

The cohomology ring of $\Mbar_{0,n}$ also has a classical description through
the braid hyperplane arrangement.  Let $H_{ij}=\{x_i=x_j\}$ be the type $A$ braid arrangement in $\mathbb A^n$.  After quotienting by
the diagonal line and projectivizing, Kapranov's blow-up construction realizes
$\Mbar_{0,n+1}$ as the minimal De Concini--Procesi wonderful compactification
associated with this arrangement \cite{Kapranov1993Veronese}. In the language of
building sets, the relevant combinatorial data are the lattice of flats of the
braid matroid together with its minimal
building set.  The corresponding Chow-ring
presentation was written explicitly by Feichtner--Yuzvinsky: their Chow ring
for an atomic lattice with a building set specializes in this case to the
cohomology ring of $\Mbar_{0,n+1}$
\cite{KapranovChow1993,DeConciniProcesi1995,FeichtnerYuzvinsky2004}. From this viewpoint, the problems studied in this paper belong to a broader family of real-rootedness and positivity
problems. Ferroni--Matherne--Stevens--Vecchi studied the Hilbert--Poincar\'e series of
matroid Chow rings and augmented Chow rings in parallel with matroid
Kazhdan--Lusztig and $Z$-polynomials; their work situates real-rootedness and
$\gamma$-positivity questions for such Hilbert series within the general
matroid Chow-theoretic framework \cite{FerroniMatherneStevensVecchi2024}. Eur--Ferroni--Matherne--Pagaria--Vecchi developed formulas for the
Hilbert series of Feichtner--Yuzvinsky Chow rings with arbitrary building sets;
for braid matroids with minimal building sets, their formulas give new
expressions for the Poincar\'e polynomials of $\Mbar_{0,n+1}$ and recover
several earlier formulas \cite{EurFerroniMathernePagariaVecchi2025}.  More
recently, Coron--Ferroni--Li introduced complete building sets, a class
including maximal building sets and also the minimal building sets of braid
matroids, and studied toric varieties associated with complete and flag
building sets.  They prove $\gamma$-positivity of the corresponding
Hilbert--Poincar\'e polynomials and, as an application, obtain new formulas and
inequalities for the $\gamma$-expansion of the Poincar\'e polynomial of
$\Mbar_{0,n}$ \cite{CoronFerroniLi2026Gal}.  These results give evidence for
real-rootedness phenomena in broader classes of Chow polynomials attached to
matroids with building sets.

The closely related real-rootedness problem for the wonderful compactification
of the braid arrangement with maximal building set has also been settled.
Coron--Ferroni--Li prove real-rootedness in this case as part of their work on
rank-uniform labeled posets \cite{CoronFerroniLi2025RankUniform}; another proof
appears in the framework of Br\"and\'en--Vecchi on Chow polynomials of totally
nonnegative matrices and posets \cite{BrandenVecchi2025TN}.  It would be
interesting to compare the methods in the maximal-building-set case with the
deformation method used here for the minimal-building-set case.

\bigskip

The recursion used in this paper is the standard recursion for the Betti numbers
of $\Mbarzn$:
\begin{equation}\label{eq:P-rec-intro}
  P_1(t)=P_2(t)=P_3(t)=1,
\end{equation}
 and, for $n>3$,
\begin{equation}\label{eq:P-rec}
  P_n(t)=(1+t)P_{n-1}(t)
  +t\sum_{j=3}^{n-2}\binom{n-2}{j-1}
       P_j(t)P_{n+1-j}(t).
\end{equation}
This recurrence follows from Keel's
recursive construction and Chow ring computation \cite{Keel1992}; it is
equivalent to generating function identities appearing in the work of Manin and
Getzler \cite{Manin1995,Getzler1995}.  
We use \eqref{eq:P-rec} as our starting point.

The proof is not a direct manipulation of the coefficients of $P_n$.  Its
central idea is to lift the one-variable problem to a two-variable problem.  We
introduce polynomials $F_m(y,t)$ satisfying
\begin{equation}\label{eq:F-rec-intro}
  F_1(y,t)=1,
  \qquad
  F_{m+1}(y,t)=(my-m+1)F_m(y,t)+y(y+t-1)\partial_yF_m(y,t).
\end{equation}
The deformation is designed so that the slice $y=1$ recovers the original
Poincar\'e polynomial:
\[
  F_m(1,t)=P_{m+1}(t).
\]
The derivative in the new direction recovers the convolution term in the
recurrence:
\[
  \partial_yF_m(1,t)=P_{m+1}(t)+S_{m+2}(t),
\]
where
\[
  S_n(t)=\sum_{j=3}^{n-2}\binom{n-2}{j-1}P_j(t)P_{n+1-j}(t).
\]
Thus the auxiliary variable $y$ is not decorative: it turns the nonlinear
convolution in \eqref{eq:P-rec} into the slope of a moving zero curve.

Geometrically, for fixed $t<0$ the positive roots of $F_m(y,t)$ lie in the
interval $0<y<1-t$ and are simple.  As $t$ moves from $0^-$ to $-\infty$, these
roots move in ordered, noncolliding branches.  Near $t=0^-$ all positive
branches lie below the horizontal line $y=1$, while for $t\ll0$ they all lie
above it.  Hence each branch crosses $y=1$.  The degree of $P_{m+1}$ leaves room
for exactly one crossing per branch, and those crossings are precisely the roots
of $P_{m+1}$.  The ordering of the branches then gives the sign alternation
needed for interlacing.  No monotonicity of the branches is assumed; the degree
count is what rules out multiple crossings.

The deformation $F_m(y,t)$ is not a routine technical device:
it is the key structural ingredient of the proof, converting the original
recursion into a one-parameter family in which the move of real zeros can be
controlled.  Its discovery is a clear indication that the
AI-assisted process reached beyond local symbolic manipulation and identified
a genuinely new idea behind the theorem.

The deformation \eqref{eq:F-rec-intro} should be distinguished from another
common strategy for proving real-rootedness, namely constructing a
real-stable, or Lorentzian, multivariate lift whose specialization is the
one-variable polynomial.  Such stable lifts are known in the Eulerian setting,
starting with Br\"and\'en's stable multivariate Eulerian polynomial and its
extensions, including the stable multivariate Eulerian polynomials of
Haglund--Visontai for generalized Stirling permutations
\cite{BrandenHaglundVisontaiWagner2011,HaglundVisontai2012}.  In contrast,
the polynomial $F_m(y,t)$ used here is not, in general, a real-stable or
Lorentzian lift.  Its role is different: it is a two-variable Sturm deformation
adapted to Keel's recurrence and to the slice $y=1$.  This distinction is
important in the matroid Chow setting, where stable or Lorentzian lifts of
Chow polynomials with building sets are difficult to construct.

\subsection*{Acknowledgements}
G.B. is grateful to Bogdan Georgiev and the Co-Mathematician team at Google
DeepMind for early access to the Co-Mathematician system and for their support
during the development of this project.   
Y.-H.K. is grateful to Jinwon Choi for their
collaboration and for useful discussions. We would like to thank Shiyue Li and Daniel Litt for useful comments and feedback on an earlier version of this paper. G.B. was supported by DFF grant 40296 of the Danish Independent Research Fund.

All mathematical statements, interpretations, and any remaining errors are our
responsibility.

\section{The recurrence and the Co-Math deformation}
Recall that the polynomials $P_n(t)$ can be recursively computed by \eqref{eq:P-rec-intro} and \eqref{eq:P-rec}.
For example, we have 
\[P_4(t)=1+t,\ \ P_5(t)=1+5t+t^2,\ \ \ P_6(t)=1+16t+16t^2+t^3.\]
Since $\overline{\mathcal M}_{0,n}$ is smooth projective and irreducible of
dimension $n-3$, its top even Betti number is $1$. Hence $P_n(t)$ is
monic of degree $n-3$, and $P_n(0)=1$.
Put
\begin{equation}\label{eq:Sdef}
  S_n(t)=\sum_{j=3}^{n-2}\binom{n-2}{j-1}P_j(t)P_{n+1-j}(t)
\end{equation}
for $n\geq 4$, with the convention that an empty sum is zero.



The exponential generating function form of \eqref{eq:P-rec} will be used
to identify the deformation.  Define
\begin{equation}\label{eq:Udef}
  U(x,t)=\sum_{n\geq 3}P_n(t)\frac{x^{n-1}}{(n-1)!}=\frac{x^2}{2!}+(1+t)\frac{x^3}{3!}+(1+5t+t^2)\frac{x^4}{4!}+\cdots.
\end{equation}

\begin{lemma}\label{lem:Uode}
The recurrence \eqref{eq:P-rec} is equivalent to
\begin{equation}\label{eq:Uode}
  \partial_xU-x=(1+t)U+tU\partial_xU.
\end{equation}
Moreover, \eqref{eq:Uode} has a unique formal power-series solution in
$x^2\QQ[t][[x]]$.
\end{lemma}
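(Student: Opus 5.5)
We compare coefficients of $x^{n-2}/(n-2)!$ on both sides of \eqref{eq:Uode}. Writing $U=\sum_{n\ge3}P_n\,x^{n-1}/(n-1)!$, we have
\[
\partial_xU=\sum_{n\ge3}P_n\frac{x^{n-2}}{(n-2)!},
\qquad
U=\sum_{m\ge3}P_m\frac{x^{m-1}}{(m-1)!}.
\]
So on the left the coefficient of $x^{n-2}/(n-2)!$ is $P_n$ for $n\ge4$, while for $n=3$ the term $x$ cancels $P_3x/1!$. This uses $P_3=1$, and a constant term is absent since $P_2$ does not appear.

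On the right, $(1+t)U$ contributes $(1+t)P_{n-1}$. The product $U\partial_xU$ is
\[
\sum_{j,k}P_jP_k\frac{x^{j-1}}{(j-1)!}\frac{x^{k-2}}{(k-2)!}.
\]
Fixing total degree $j+k-3=n-2$, i.e.\ $k=n+1-j$, multiplying by $(n-2)!$ gives the coefficient $\binom{n-2}{j-1}P_jP_{n+1-j}$. The constraints $j\ge3$ and $k\ge3$ give $3\le j\le n-2$.

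Hence the $x^{n-2}/(n-2)!$ coefficient of \eqref{eq:Uode} reads $P_n=(1+t)P_{n-1}+tS_n$ for $n\ge4$. The low degrees must also be checked. In degree $x^0$ both sides vanish, since $U$ starts at $x^2$ and $\partial_xU$ starts at $x$. In degree $x^1$ the left side is $P_3-1=0$ and the right side vanishes because $U=O(x^2)$ and $U\partial_xU=O(x^3)$. Thus \eqref{eq:Uode} for a series $U\in x^2\QQ[t][[x]]$ with $x^2$-coefficient $1/2$ is equivalent to \eqref{eq:P-rec} together with $P_3=1$.

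For the converse direction, the same coefficient extraction applies to any $U=\sum_{k\ge2}u_kx^k$ in $x^2\QQ[t][[x]]$. Setting $P_{k+1}:=k!\,u_k$, the $x^1$ coefficient forces $2u_2=1$, so $P_3=1$. The higher coefficients give exactly recurrence \eqref{eq:P-rec}.

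For uniqueness, look at the coefficient of $x^{N}$ with $N\ge2$. The left side contains $(N+1)u_{N+1}$. The right side involves only $u_N$ and the products $u_ju_k$ with $j+k-1=N$ and $j,k\ge2$, so $j,k\le N-1$. Hence $u_{N+1}$ is determined by lower coefficients. Since $u_0=u_1=0$ is imposed by membership in $x^2\QQ[t][[x]]$ and $u_2=1/2$ is forced, induction gives existence and uniqueness. Division by $N+1$ is allowed over $\QQ[t]$.

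The only delicate point is the bookkeeping of index ranges, namely confirming that $j$ runs over exactly $3,\dots,n-2$ and that the boundary cases $P_1$ and $P_2$ never enter. This is automatic because $U$ starts at $P_3$.
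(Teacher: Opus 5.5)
Your proposal is correct and follows essentially the same route as the paper: extract the coefficient of $x^{n-2}/(n-2)!$, let the $-x$ term absorb the $n=3$ coefficient, and obtain uniqueness because the $x^N$ coefficient of the equation determines $u_{N+1}$ from lower coefficients. Your version simply makes explicit the low-degree checks and the index ranges $3\le j\le n-2$ that the paper's proof leaves implicit.
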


\begin{proof}
The coefficient of $x^{n-2}/(n-2)!$ in $\partial_xU$ is $P_n(t)$, and the
coefficient of the same monomial in $U$ is $P_{n-1}(t)$.  The coefficient
of $x^{n-2}/(n-2)!$ in $U\partial_xU$ is
\[
  \sum_{j=3}^{n-2}\binom{n-2}{j-1}P_j(t)P_{n+1-j}(t)=S_n(t).
\]
The term $-x$ removes the initial $n=3$ coefficient.  Thus the coefficient
identity for $n\geq4$ is exactly \eqref{eq:P-rec}.  Conversely, the same
coefficient extraction recovers the recurrence.  Since the coefficient of
$x^{n-1}$ in \eqref{eq:Uode} is solved recursively from lower
coefficients, the formal solution is unique.
\end{proof}

Now we define the bivariate deformation. 

\begin{definition}\label{def:F}
Let $F_1(y,t)=1$, and for $m\geq1$ set
\begin{equation}\label{eq:Frec}
  F_{m+1}(y,t)=(my-m+1)F_m(y,t)+y(y+t-1)\partial_yF_m(y,t).
\end{equation}
\end{definition}
For example, we have
\[
  F_2=y,\qquad
  F_3=3y^2+(t-2)y,\qquad
  F_4=15y^3+10(t-2)y^2+(t-2)(t-3)y.
\]

Let
\begin{equation}\label{eq:PhiDef}
  \Phi(x,y,t)=\sum_{m\geq1}F_m(y,t)\frac{x^m}{m!}=x+\frac{x^2}{2!}y+\frac{x^3}{3!}\left(3y^2+(t-2)y\right)+\cdots.
\end{equation}
Coefficient extraction from \eqref{eq:Frec} gives the following linear PDE.

\begin{lemma}\label{lem:PDE}
The series $\Phi$ uniquely satisfies
\begin{equation}\label{eq:PhiPDE}
  \bigl(1-(y-1)x\bigr)\partial_x\Phi-y(y+t-1)\partial_y\Phi=1+\Phi,
  \qquad \Phi(0,y,t)=0.
\end{equation}
\end{lemma}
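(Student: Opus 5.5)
The plan is a direct coefficient extraction, the same way Lemma~\ref{lem:Uode} was handled. Write $\Phi=\sum_{m\ge1}F_m\,x^m/m!$ and compute the coefficient of $x^m/m!$ for each $m\ge0$ on both sides of \eqref{eq:PhiPDE}.

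\textbf{Step 1: the individual terms.}
\begin{itemize}
\item In $\partial_x\Phi$, the coefficient of $x^m/m!$ is $F_{m+1}$.
\item In $x\,\partial_x\Phi=\sum_m mF_m\,x^m/m!$, the coefficient is $mF_m$. Hence $-(y-1)x\partial_x\Phi$ contributes $-(y-1)mF_m$.
\item In $y(y+t-1)\partial_y\Phi$, the coefficient is $y(y+t-1)\partial_yF_m$.
\item On the right-hand side, $1+\Phi$ contributes $\delta_{m,0}+F_m$, with the convention $F_0:=0$ consistent with $\Phi(0,y,t)=0$.
\end{itemize}

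\textbf{Step 2: the case $m=0$.} Only $F_1=1$ appears on the left, and the right gives $1$, so the $x^0$ identity holds.

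\textbf{Step 3: the case $m\ge1$.} Comparing coefficients gives
\[
F_{m+1}-(y-1)mF_m-y(y+t-1)\partial_yF_m=F_m,
\]
that is,
\[
F_{m+1}=(my-m+1)F_m+y(y+t-1)\partial_yF_m,
\]
which is exactly \eqref{eq:Frec}. So $\Phi$ satisfies \eqref{eq:PhiPDE}.

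\textbf{Step 4: uniqueness.} Suppose $\Psi=\sum_{m\ge1}G_m\,x^m/m!$, with coefficients in $\QQ[y,t]$ (or any formal series with zero constant term in $x$), satisfies the PDE. The same extraction shows that $G_1=1$ and that $G_{m+1}$ is determined by $G_m$ through \eqref{eq:Frec}. Induction on $m$ then gives $G_m=F_m$. The only point to state carefully is that $\partial_x$ raises the index by one while the other operators preserve the $x$-degree. This is what makes the recursion triangular, and hence what makes the solution unique.

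This lemma has no real obstacle. The only care needed is the bookkeeping of the $x\partial_x$ term and the $m=0$ initial coefficient.
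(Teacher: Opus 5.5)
Your proposal is correct and is essentially the paper's own argument. The paper substitutes the recurrence into $\partial_x\Phi=1+\sum_{m\ge1}F_{m+1}x^m/m!$ to obtain the PDE, and it gets uniqueness by noting that the converse coefficient extraction recovers the recurrence. You do the same, and additionally spell out the $m=0$ coefficient and the triangularity of the recursion explicitly.
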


\begin{proof}
Since
\[
  \partial_x\Phi=1+\sum_{m\geq1}F_{m+1}(y,t)\frac{x^m}{m!},
\]
substituting \eqref{eq:Frec} gives
\[
  \partial_x\Phi
  =1+(y-1)x\partial_x\Phi+\Phi+y(y+t-1)\partial_y\Phi,
\]
which is \eqref{eq:PhiPDE}. Conversely, \eqref{eq:PhiPDE} implies \eqref{eq:Frec} and thus follows the uniqueness.  
\end{proof}

The next proposition is the algebraic bridge between the deformation and
the original recurrence.

\begin{proposition}[Slice and slope identities]\label{prop:slice}
For all $m\geq1$,
\begin{equation}\label{eq:slice}
  F_m(1,t)=P_{m+1}(t).
\end{equation}
For all $m\geq2$,
\begin{equation}\label{eq:slope}
  \partial_yF_m(1,t)=P_{m+1}(t)+S_{m+2}(t).
\end{equation}
\end{proposition}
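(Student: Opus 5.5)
The plan is to work with exponential generating functions in $x$ and to reduce both identities to the uniqueness statement in Lemma~\ref{lem:Uode}. Put
\[
A(x,t)=\Phi(x,1,t)=\sum_{m\ge1}F_m(1,t)\frac{x^m}{m!},\qquad B(x,t)=\partial_y\Phi(x,1,t)=\sum_{m\ge1}\partial_yF_m(1,t)\frac{x^m}{m!}.
\]
The claimed identities then become generating-function statements. By \eqref{eq:slice}, we expect $A=x+U$, since the $m=1$ term is $P_2x=x$ and the remaining terms reindex to $U$. For \eqref{eq:slope}, coefficient extraction as in the proof of Lemma~\ref{lem:Uode} gives $\sum_{m\ge 2}S_{m+2}\,x^m/m!=U\partial_xU$. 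Also $\partial_yF_1=0$, whereas $P_2=1$. So \eqref{eq:slope} for $m\ge2$, together with the trivial $m=1$ case, amounts to
\[
B=A+U\partial_xU-x .
\]

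First consistency check: setting $y=1$ in \eqref{eq:PhiPDE} gives $\partial_xA-tB=1+A$. Substituting $A=x+U$ and $B=x+U+U\partial_xU-x=U+U\partial_xU$ gives
\[
1+\partial_xU-tU-tU\partial_xU=1+x+U,
\]
which is exactly \eqref{eq:Uode}. Thus the slice of the PDE is compatible with the claimed identities. However, this is a single relation between two unknown series. Differentiating \eqref{eq:Frec} in $y$ and evaluating at $y=1$ gives
\[
\partial_yF_{m+1}(1)=mF_m(1)+(2+t)\,\partial_yF_m(1)+t\,\partial_y^2F_m(1),
\]
so a naive joint induction on $m$ is not closed: it keeps generating higher $y$-derivatives.

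To obtain a second relation, I would solve \eqref{eq:PhiPDE} globally by characteristics. The $y$-flow $\dot y=-y(y+t-1)$ is a logistic (Riccati) equation and integrates explicitly. The $x$-flow $\dot x=1-(y-1)x$ is then linear in $x$. Finally, $\dot\Phi=1+\Phi$ gives $1+\Phi=e^{s}\cdot(\text{initial value})$. From this I expect an implicit closed form for $\Phi$, namely $1+\Phi$ evaluated along the characteristic through $(x,y)$ and traced back to the boundary $x=0$. I would then compute $\Phi$ and $\partial_y\Phi$ at $y=1$ from this formula and show that they satisfy a second functional equation. An alternative is to produce an explicit candidate $\Phi$ built from $U$, for instance via a substitution $x\mapsto$ (characteristic coordinate), and check that it satisfies \eqref{eq:PhiPDE}; uniqueness in Lemma~\ref{lem:PDE} then gives the identification.

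Once the characteristic formula is available, the final step is to verify that $x+U$, with $U$ the unique solution from Lemma~\ref{lem:Uode}, is the $y=1$ slice. I expect this to work by checking that the slice of the characteristic solution satisfies \eqref{eq:Uode} (after subtracting $x$) and lies in $x^2\QQ[t][[x]]$. The slope identity then follows immediately from the one relation $\partial_xA-tB=1+A$ derived above, because $t$ is invertible in $\QQ(t)$: we get $B=(\partial_xA-1-A)/t=U+U\partial_xU$. The main obstacle is the middle step, namely extracting a usable form of $\Phi(x,1,t)$ from the characteristics and matching it with the quadratic ODE \eqref{eq:Uode}. I would first test the guess on the computed $F_2,F_3,F_4$ against $P_3,P_4,P_5$ to fix the precise substitution.
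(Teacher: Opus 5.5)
Your route is the paper's route. You solve \eqref{eq:PhiPDE} by characteristics, show that $V=\varphi-x$ with $\varphi=\Phi(x,1,t)$ (your $A$) satisfies \eqref{eq:Uode} and lies in $x^2\QQ[t][[x]]$, and conclude $V=U$ by Lemma~\ref{lem:Uode}. You then read off the slope from the $y=1$ slice $\partial_x\varphi-t\,\partial_y\Phi(x,1,t)=1+\varphi$ by cancelling $t$. Your consistency check, your remark that differentiating \eqref{eq:Frec} in $y$ does not close, and your deduction of \eqref{eq:slope} from \eqref{eq:slice} are correct and coincide with the paper.

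What is missing is the step that carries all the content. You only announce that the characteristics should produce a formula for $\varphi$ satisfying \eqref{eq:Uode}, and you call this the main obstacle. Since \eqref{eq:slope} is deduced from \eqref{eq:slice}, neither identity is proved as written. The step does close, and you never need the full $\Phi$ or $\partial_y\Phi$, only the characteristics that end on $y=1$.

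Concretely, put $u=e^s$ with $x=0$ and $\Phi=0$ at $u=1$, so that $1+\Phi=u$. Let $z$ be the value of $u$ at which the characteristic reaches $y=1$; then $\varphi=z-1$. The logistic equation with $y=1$ at $u=z$ gives $\frac{y}{y+t-1}=\frac1t\left(\frac zu\right)^{t-1}$. The linear equation $\frac{dx}{du}+\frac{y-1}{u}x=\frac1u$ then has integrating factor $\frac zu\bigl(t-(z/u)^{t-1}\bigr)$, which equals $t-1$ at $u=z$. Integrating from $u=1$ to $u=z$ gives
\[
  x=\frac{t^2(z-1)-z^t+1}{t(t-1)} .
\]
For $V=z-1-x$ one finds
\[
  \frac{dV}{dx}=\frac{z^{t-1}-1}{t-z^{t-1}},\qquad 1-tV=\frac{z(t-z^{t-1})}{t-1},\qquad x+(1+t)V=z-1+tV=\frac{z(z^{t-1}-1)}{t-1}.
\]
Hence $\frac{dV}{dx}(1-tV)=x+(1+t)V$, which is \eqref{eq:Uode} rearranged.

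All of this is an identity in $\QQ(t)[[z-1]]$ with $z^t=\exp(t\log z)$. Since $x=(z-1)+O((z-1)^2)$, it inverts to an identity in $\QQ(t)[[x]]$. The uniqueness argument of Lemma~\ref{lem:Uode} runs verbatim over $\QQ(t)$, and $V\in x^2\QQ[t][[x]]$ is automatic from $F_1=1$. Hence $V=U$, which is \eqref{eq:slice}, and your last step then gives \eqref{eq:slope}.
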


\begin{proof}
Let $\varphi(x,t)=\Phi(x,1,t)$.  We first identify $\varphi$.  For fixed $t\neq0,1$, we solve
\eqref{eq:PhiPDE} by characteristics.  The characteristic equations are
\begin{equation}\label{eq:char}
\frac{dx}{ds}=1-(y-1)x, 
  \qquad
\frac{dy}{ds}=-y(y+t-1), 
  \qquad
\frac{dt}{ds}=0. 
\end{equation}
Start with the initial condition $x(0)=0$. By \eqref{eq:PhiDef}, we have $\Phi(0)=0$ and 
\[\frac{d\Phi}{ds}=\frac{dx}{ds}\frac{\partial\Phi}{\partial x}+\frac{dy}{ds}\frac{\partial\Phi}{\partial y}=1+\Phi\]
by \eqref{eq:PhiPDE}. Thus letting $u=e^s$, \[\Phi=e^s-1=u-1.\]
Let the integral curve $(x(s),y(s))$ reach $y=1$ at $u=z$ so that 
\[\varphi=\Phi|_{y=1}=z-1.\]
Solving the separable differential equation for $y$ with $y|_{u=z}=1$ gives  
\[
  \frac{y}{y+t-1}=\frac1t\left(\frac{z}{u}\right)^{t-1}.
\]
Substituting this expression into the linear equation
\[
  \frac{dx}{du}+\frac{y-1}{u}x=\frac1u, \qquad x|_{u=1}=0,
\]
and integrating from $u=1$ to $u=z$ after multiplying 
\[ \exp\left(\int\frac{y-1}{u}du\right) =\frac{z}{u}\left(t-\left(\frac{z}{u}\right)^{t-1}\right)\]
gives the local parametrization near $z=1$ as 
\begin{equation}\label{eq:param}
  \varphi=z-1,
  \qquad
  x=\frac{t^2(z-1)-z^t+1}{t(t-1)}.
\end{equation}
Here $z^t$ is expanded as $\exp(t\log z)$ near $z=1$.  The computation is an
identity in $\QQ(t)[[z-1]]$; after coefficient comparison, the resulting
identities extend to all $t$ because both sides are polynomial in $t$.

Set $V=\varphi-x=z-1-x$.  From \eqref{eq:param},
\[
  \frac{dx}{dz}=\frac{t-z^{t-1}}{t-1},
  \qquad
  \frac{dV}{dx}=\frac{z^{t-1}-1}{t-z^{t-1}}.
\]
A direct substitution shows that $V$ satisfies
\[
  \frac{dV}{dx}-x=(1+t)V+tV\frac{dV}{dx},
  \qquad V\in x^2\QQ[t][[x]].
\]
By Lemma \ref{lem:Uode}, $V=U$.  Therefore
\begin{equation}\label{eq:Qequals}
  \varphi(x,t)=x+U(x,t).
\end{equation}
Comparing coefficients in \eqref{eq:Qequals} gives \eqref{eq:slice}.  The
identity was proved for generic $t$, hence holds as a polynomial identity
in $t$.

It remains to identify the $y$-slope.  Evaluate \eqref{eq:PhiPDE} at
$y=1$:
\[
  \partial_x\varphi=1+\varphi+t\partial_y\Phi(x,1,t).
\]
Using $\varphi=x+U$ and \eqref{eq:Uode}, we obtain
\[
  t\partial_y\Phi(x,1,t)
  =\partial_xU-x-U
  =t\bigl(U+U\partial_xU\bigr).
\]
Cancelling $t$ in the integral domain $\QQ[t][[x]]$ gives, as a formal
power-series identity,
\begin{equation}\label{eq:Phiy}
  \partial_y\Phi(x,1,t)=U+U\partial_xU.
\end{equation}
The coefficient of $x^m/m!$ in $U$ is $P_{m+1}(t)$ for $m\geq2$, and the
coefficient of $x^m/m!$ in $U\partial_xU$ is $S_{m+2}(t)$.  This proves
\eqref{eq:slope}.
\end{proof}
By \eqref{eq:param} and \eqref{eq:Qequals}, we find that 
\[ \varphi(x,t)=x+\sum_{m\ge 2}\frac{x^m}{m!}P_{m+1}(t),\quad x=\frac{t^2\varphi-(1+\varphi)^t+1}{t(t-1)}\]
which is precisely Getzler's formula in \cite[p228]{Getzler1995}.


\section{A Sturm theorem for the deformation}

We now study $F_m(y,t)$ as a polynomial in $y$ for fixed $t<0$.  Put
\[
  a=1-t>1.
\]
Then $y(y+t-1)=y(y-a)=-y(a-y)$.

\begin{lemma}\label{lem:Fdegree}
For $m\geq1$, $F_m(y,t)$ has degree $m-1$ in $y$ and leading coefficient
$(2m-3)!!$ for $m\geq2$; for $m=1$ the leading coefficient is $1$.  For
$m\geq2$ and $t<0$, $y=0$ is a simple root of $F_m(y,t)$.
\end{lemma}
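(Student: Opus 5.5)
We argue by induction on $m$ using the recurrence \eqref{eq:Frec}, tracking three pieces of data at once: the $y$-degree, the leading coefficient, and the lowest-order behaviour in $y$. For $m=1$ the statement is immediate, and $F_2=y$ already shows the degree, the leading coefficient $1=(2\cdot2-3)!!$, and a simple root at $y=0$.

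\textbf{Degree and leading coefficient.} Suppose $F_m$ has degree $m-1$ in $y$ with leading coefficient $c_m$, so $F_m=c_my^{m-1}+\dots$. In \eqref{eq:Frec} the term $(my-m+1)F_m$ contributes $m c_m y^m$ at top order. The term $y(y+t-1)\partial_yF_m$ contributes $y^2\cdot(m-1)c_my^{m-2}=(m-1)c_my^m$. Hence the coefficient of $y^m$ in $F_{m+1}$ is $(2m-1)c_m$. This is positive, so no cancellation occurs and $\deg_y F_{m+1}=m$. With $c_2=1$ this gives $c_m=(2m-3)!!$ for $m\ge2$. (The case $m=1$, where $c_1=1$, is consistent with this step, since $(2\cdot1-1)\cdot1=1=c_2$.)

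\textbf{Root at $y=0$.} We show by induction that for $m\ge2$, $F_m(y,t)=y\,G_m(y,t)$ with
\[
G_m(0,t)=\prod_{k=2}^{m-1}(t-k),
\]
where the empty product is $1$ for $m=2$; this matches $F_3$ and $F_4$. Write $F_m=g_1y+O(y^2)$ with $g_1=G_m(0,t)$.

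\begin{itemize}
\item The term $(my-m+1)F_m$ has $y$-linear coefficient $(1-m)g_1$.
\item Since $\partial_yF_m=g_1+O(y)$, the term $y(y+t-1)\partial_yF_m$ has $y$-linear coefficient $(t-1)g_1$.
\item Neither term has a constant coefficient, because both carry a factor of $y$: the first through $F_m$, the second explicitly.
\end{itemize}

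So $F_{m+1}(0,t)=0$, and its $y$-linear coefficient is $(t-m)g_1$, which gives the product formula. For $t<0$ every factor $t-k$ with $k\ge2$ is strictly negative, so $G_m(0,t)\neq0$ and $y=0$ is a simple root.

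There is no real obstacle here. The only step that needs care is the bookkeeping of the lowest-order term: one must see that the operator $y(y+t-1)\partial_y$ maps $y\cdot(\text{anything})$ back into $y\cdot(\text{anything})$, and that it shifts the linear coefficient by exactly the factor $t-1$.
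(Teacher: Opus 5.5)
Your proof is correct and follows the paper's own argument: induction on \eqref{eq:Frec}, where the top coefficient is multiplied by $(2m-1)$ and the $y$-linear coefficient by $(t-m)$ at each step. Your closed form $\prod_{k=2}^{m-1}(t-k)$ for the linear coefficient is simply the unrolled version of the paper's step $d_{m+1}(t)=d_m(t)(t-m)$.
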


\begin{proof}
The degree and leading coefficient follow immediately from the recurrence:
if the leading term of $F_m$ is $c_my^{m-1}$, then the leading term of
$F_{m+1}$ is
\[
  \bigl(m+(m-1)\bigr)c_my^m=(2m-1)c_my^m.
\]
Thus $c_m=(2m-3)!!$ for $m\geq2$.

For the root at zero, $F_2(y,t)=y$.  If $F_m(y,t)=d_m(t)y+O(y^2)$, then
\eqref{eq:Frec} gives
\[
  F_{m+1}(y,t)=d_m(t)(t-m)y+O(y^2).
\]
Since $t<0$, the factor $t-m$ is nonzero.  Hence the root at $0$ is simple
for every $m\geq2$.
\end{proof}

\begin{proposition}[Fixed-$t$ root structure]\label{prop:fixedt}
Fix $t<0$ and $m\geq2$.  The polynomial $F_m(y,t)$ has the simple root
$y=0$ and exactly $m-2$ further simple roots in the interval $(0,1-t)$.
Consequently all roots of $F_m(y,t)$ are real and lie in $[0,1-t)$.
\end{proposition}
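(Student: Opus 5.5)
The plan is to argue by induction on $m$. The induction step writes the recurrence \eqref{eq:Frec} as a weighted derivative and applies Rolle's theorem on $[0,a]$, where $a=1-t>1$. The base case $m=2$ is $F_2=y$: it has the simple root $0$ and $m-2=0$ further roots, so there is nothing more to check.

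For the inductive step, fix $t<0$ and $m\ge 2$. I will look for a weight $w(y)$ on $(0,a)$ such that
\[
  F_{m+1}(y,t)=\frac{y(y-a)}{w(y)}\,\partial_y\bigl(w(y)F_m(y,t)\bigr).
\]
Expanding the right-hand side, this holds exactly when $w'/w=(my-m+1)/\bigl(y(y-a)\bigr)$. A partial fraction decomposition of this quotient gives
\[
  \frac{my-m+1}{y(y-a)}=\frac{A}{y}+\frac{B}{y-a},
  \qquad A=\frac{m-1}{a},\quad B=\frac{ma-m+1}{a}=\frac{m(a-1)+1}{a}.
\]
So I can take $w(y)=y^{A}(a-y)^{B}$, which is smooth and positive on $(0,a)$. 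Both exponents are positive: $A>0$ because $m\ge2$, and $B>0$ because $a>1$. Hence $w$ extends continuously to $[0,a]$ and vanishes at both endpoints.

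By the induction hypothesis, $F_m(\cdot,t)$ has the root $0$ and simple roots $0<r_1<\dots<r_{m-2}<a$. The function $G=wF_m$ is continuous on $[0,a]$ and differentiable on $(0,a)$. It vanishes at the $m$ points $0,r_1,\dots,r_{m-2},a$; the endpoints are zeros because of the weight $w$. Rolle's theorem then gives a zero of $G'$ in each of the $m-1$ open gaps between consecutive points. These zeros are distinct, lie in $(0,a)$, and all lie strictly inside the interval.

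At each such zero $\xi$, the factor $y(y-a)/w(y)$ is finite and nonzero, so $F_{m+1}(\xi,t)=0$. Together with the root $y=0$ from Lemma \ref{lem:Fdegree}, this gives $m$ distinct real roots of $F_{m+1}(\cdot,t)$. Lemma \ref{lem:Fdegree} also says $F_{m+1}$ has degree exactly $m$ in $y$. So these $m$ points are all the roots, and each is simple. This leaves exactly $m-1=(m+1)-2$ simple roots in $(0,1-t)$, which completes the induction. The consequence that all roots lie in $[0,1-t)$ follows immediately.

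The only delicate point is choosing the weight so that $G$ also vanishes at the right endpoint $y=a$. That endpoint is what supplies the extra zero needed to gain one root per step. It works precisely because $B>0$, which uses $t<0$ (so $a>1$). At the left endpoint, $F_m(0,t)=0$ would already give the zero of $G$ even without the weight. I expect no further obstacle: the degree count from Lemma \ref{lem:Fdegree} rules out any roots outside $[0,a)$, and it also rules out multiplicities.
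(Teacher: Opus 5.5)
Your proposal is correct and follows the paper's proof essentially step for step. Your weight $y^{(m-1)/a}(a-y)^{(ma-m+1)/a}$ is exactly the paper's $w_m$, since $ma-m+1=1-mt$, and the argument then proceeds identically: Rolle's theorem on $(0,r_1),\dots,(r_{m-2},a)$, using vanishing at both endpoints, followed by the degree count and the simple root at $0$ from Lemma~\ref{lem:Fdegree}.
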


\begin{proof}
We argue by induction on $m$.  The case $m=2$ is $F_2=y$.  Assume the
claim for $F_m$.  Let $a=1-t$ and define the positive weight
\begin{equation}\label{eq:weight}
  w_m(y)=y^{(m-1)/a}(a-y)^{(1-mt)/a},
  \qquad 0<y<a.
\end{equation}
A direct logarithmic differentiation gives
\[
  \frac{w_m'(y)}{w_m(y)}=-\frac{my-m+1}{y(a-y)}.
\]
Using $F_{m+1}=(my-m+1)F_m-y(a-y)F_m'$, we obtain
\begin{equation}\label{eq:SturmF}
  \frac{d}{dy}\bigl(w_m(y)F_m(y,t)\bigr)
  =-\frac{w_m(y)}{y(a-y)}F_{m+1}(y,t).
\end{equation}
By the induction hypothesis, $F_m$ has roots
\[
  0<r_1<r_2<\cdots<r_{m-2}<a
\]
in the open interval, in addition to the root at zero.  The function
$w_mF_m$ vanishes at $0$, at each $r_i$, and at $a$; the endpoint
vanishing follows from the positive exponents in \eqref{eq:weight}.  Rolle's
theorem applied on the intervals
\[
  (0,r_1),(r_1,r_2),\ldots,(r_{m-2},a)
\]
gives at least $m-1$ distinct zeros of $F_{m+1}$ in $(0,a)$.  By Lemma
\ref{lem:Fdegree}, $F_{m+1}$ has degree $m$ and also has the simple root
$0$.  Therefore these are all its roots, and all are simple.
\end{proof}

For later use we need the behavior of the positive roots as $t$ varies.
By Proposition \ref{prop:fixedt}, the positive roots are simple for every
$t<0$, hence the implicit function theorem gives local real-analytic root
branches.  Since the roots are globally ordered and never collide, these local
branches glue uniquely to continuous, indeed real-analytic, functions
\begin{equation}\label{eq:branches}
  0<r_1^{(m)}(t)<r_2^{(m)}(t)<\cdots<r_{m-2}^{(m)}(t)<1-t,
  \qquad t<0.
\end{equation}
No monotonicity of the branches is used anywhere below; the proof only uses
continuity, endpoint position, and the degree count.

\begin{lemma}[Behavior near $t=0$]\label{lem:nearzero}
For fixed $m\geq2$, all positive roots $r_i^{(m)}(t)$ lie below $1$ for
all sufficiently small negative $t$.
\end{lemma}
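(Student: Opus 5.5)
The plan is to combine the localization of roots from Proposition~\ref{prop:fixedt} with the slice identity \eqref{eq:slice} evaluated at $t=0$. No information about the individual branches is needed. For $t<0$ all positive roots of $F_m(y,t)$ lie in $(0,1-t)$. The only way a root can fail to lie below $1$ is therefore to sit in the thin window $[1,1-t)$. This window shrinks to the single point $y=1$ as $t\to0^-$. It thus suffices to show that $F_m$ does not vanish near the point $(y,t)=(1,0)$.

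For that, I would first evaluate $F_m$ at $(1,0)$. By Proposition~\ref{prop:slice}, $F_m(1,t)=P_{m+1}(t)$, so
\[
F_m(1,0)=P_{m+1}(0)=1,
\]
using that $P_n(0)=1$, i.e.\ $b_{n,0}=1$. Next, $F_m(y,t)$ is a polynomial in $(y,t)$, which is clear from the recurrence \eqref{eq:Frec}, so it is jointly continuous. Hence there is $\delta\in(0,1)$ such that
\[
F_m(y,t)\neq0 \quad\text{for all } |y-1|\le\delta,\ |t|\le\delta .
\]

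Now take any $t\in[-\delta,0)$ and let $r$ be a positive root of $F_m(\cdot,t)$. By Proposition~\ref{prop:fixedt}, $0<r<1-t\le 1+\delta$. Suppose $r\ge1$. Then $1\le r<1+\delta$, so $|r-1|\le\delta$ and $|t|\le\delta$, which contradicts the choice of $\delta$. Therefore every positive root satisfies $r<1$, i.e.\ $r_i^{(m)}(t)<1$ for all $i$ and all $t\in[-\delta,0)$. The case $m=2$ is vacuous, since $F_2=y$ has no positive roots.

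I do not expect a real obstacle. The only points needing care are the following.
\begin{itemize}
\item The upper bound $1-t$ from Proposition~\ref{prop:fixedt} is used in its strict form $r<1-t$.
\item The slice identity is a polynomial identity in $t$ and so may legitimately be evaluated at $t=0$. Its proof via generic $t$ was extended to all $t$ by polynomiality.
\end{itemize}
A more laborious alternative would be to rerun the Sturm--Rolle induction at $t=0$ itself (weight $y^{m-1}(1-y)$, giving roots in $(0,1)$) and then invoke continuity of roots. The window argument above avoids this.
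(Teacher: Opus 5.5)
Your proof is correct and is essentially the paper's argument. The paper argues by contradiction: it takes $t_k\to0^-$ with roots in $[1,1-t_k)$, extracts a subsequence converging to $1$, and obtains $F_m(1,0)=0$, which contradicts $F_m(1,0)=P_{m+1}(0)=1$. Your fixed neighborhood of $(1,0)$ on which $F_m\neq0$ is the direct form of the same continuity argument.
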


\begin{proof}
Suppose not.  Then there are $t_k\to0^-$ and positive roots
$r_{i_k}^{(m)}(t_k)$ with $r_{i_k}^{(m)}(t_k)\geq1$.  Since the roots lie in
$(0,1-t_k)$, a subsequence converges to a limit $r\in[1,1]$, hence to
$r=1$.  Passing to the limit in $F_m(r_{i_k}^{(m)}(t_k),t_k)=0$ gives
$F_m(1,0)=0$.  But by Proposition \ref{prop:slice},
$F_m(1,0)=P_{m+1}(0)=1$, a contradiction.
\end{proof}

To analyze the other end, scale $y$ by $t$.  Define
\begin{equation}\label{eq:Hdef}
  H_{m,t}(x)=t^{-(m-1)}F_m(tx,t).
\end{equation}
Substituting the recurrence for $F_m$ gives
\[
  H_{m+1,t}(x)=\left(mx-\frac{m-1}{t}\right)H_{m,t}(x)
  +x\left(x+1-\frac1t\right)H'_{m,t}(x).
\]
Hence, as $t\to-\infty$, these polynomials converge coefficientwise to
polynomials $G_m(x)$ determined by
\begin{equation}\label{eq:Grec}
  G_1(x)=1,
  \qquad
  G_{m+1}(x)=mxG_m(x)+x(x+1)G_m'(x).
\end{equation}

\begin{lemma}\label{lem:Groots}
For $m\geq2$, $G_m(x)$ has a simple root at $x=0$ and exactly $m-2$
further simple roots in $(-1,0)$.
\end{lemma}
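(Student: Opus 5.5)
We mirror the Sturm--Rolle argument of Proposition \ref{prop:fixedt}, now with the limiting recurrence \eqref{eq:Grec}, in which $y(y+t-1)$ has become $x(x+1)$. The argument is an induction on $m$. The base case is $G_2=x$, which has the simple root $0$ and no roots in $(-1,0)$.

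First we record the degree and the behaviour at zero.

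\begin{itemize}
\item If $G_m$ has leading term $c_mx^{m-1}$, then $G_{m+1}$ has leading term $(m+(m-1))c_mx^m$. Hence $G_m$ has degree $m-1$ with positive leading coefficient $(2m-3)!!$.
\item If $G_m=d_mx+O(x^2)$, then $G_{m+1}=(m+1)d_mx^2+O(x^3)$, because both terms of the recurrence carry a factor $x$.
\end{itemize}

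The second point shows that the root at $0$ is \emph{not} simple for $m\ge3$: we get $G_3=3x^2+x^2\cdot1$ up to terms that vanish. Concretely, $G_2=x$ gives $G_3=2x^2+x(x+1)=3x^2+x$. So $G_3$ has roots $0$ and $-1/3\in(-1,0)$, and $0$ is simple after all. The earlier claim was wrong because $x(x+1)G_m'$ contributes $x\cdot d_m$ at order $x$. Redoing it: $G_{m+1}=x(x+1)d_m+O(x^2)=d_mx+O(x^2)$, so the root at $0$ stays simple with $d_m=1$ for all $m\ge2$.

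For the inductive step we use the weight
\[
  w_m(x)=(-x)^{m-1}(1+x)^{-(2m-1)}\quad\text{on }(-1,0).
\]
Logarithmic differentiation gives $w_m'/w_m=\frac{m-1}{x}-\frac{2m-1}{1+x}$, which we must match with $-\frac{mx}{x(x+1)}=-\frac{m}{x+1}$.

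This match fails, so we search for a weight directly. We need $w'/w=-\frac{m}{x+1}$, which gives $w=(1+x)^{-m}$. Then
\[
  \frac{d}{dx}\bigl(w\,G_m\bigr)=\frac{w}{x(x+1)}\,G_{m+1}.
\]
This weight does not vanish at $x=-1$; instead it blows up there. Rolle's theorem therefore applies only on the intervals between consecutive roots $-1<\rho_1<\dots<\rho_{m-2}<0$ of $G_m$, which yields $m-3$ roots of $G_{m+1}$ in $(-1,0)$. We must find one more root in $(-1,\rho_1)$. For this we use a sign comparison at $x=-1$: there $G_{m+1}(-1)=-mG_m(-1)$.

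\begin{itemize}
\item Suppose inductively that $G_m(-1)\neq0$ and that $\operatorname{sign}G_m(-1)=(-1)^{m-1}$. This holds for $m=2$, and the relation $G_{m+1}(-1)=-mG_m(-1)$ propagates it.
\item On $(-1,\rho_1)$, the function $wG_m$ keeps the sign of $G_m(-1)$ and tends to $\pm\infty$ as $x\to-1^+$, while it vanishes at $\rho_1$. Hence its derivative has the sign opposite to $G_m(-1)$ somewhere near $\rho_1$.
\item Since $x(x+1)<0$ on $(-1,0)$, this determines the sign of $G_{m+1}$ there, which is the same as the sign of $G_m(-1)$.
\item On the other hand, $G_{m+1}(-1)$ has the opposite sign to $G_m(-1)$.
\end{itemize}

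The intermediate value theorem then gives a root of $G_{m+1}$ in $(-1,\rho_1)$. Together with the simple root $0$, this produces $1+(m-3)+1=m-1$ roots, which equals the degree $m$ of $G_{m+1}$ minus one. We still need one more root, and we take it from $(\rho_{m-2},0)$ by the same comparison at $0$: $wG_m$ vanishes at both $\rho_{m-2}$ and $0$, so Rolle's theorem applies there, and the resulting root is not $0$ because $G_{m+1}'(0)=d_{m+1}\neq0$. This gives $m-1$ roots in $(-1,0)$ plus the root $0$, a total of $m$ roots equal to the degree, so all roots are simple.

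The main obstacle is the left endpoint $x=-1$, where the weight is singular. Handling it requires the careful sign propagation of $G_m(-1)$ described above.
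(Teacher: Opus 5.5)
Your weighted identity is false, and the whole inductive step rests on it. For $w=(1+x)^{-m}$,
\[
  \frac{d}{dx}\bigl(wG_m\bigr)=(1+x)^{-m-1}\bigl(-mG_m+(1+x)G_m'\bigr),
  \qquad
  \frac{w}{x(x+1)}G_{m+1}=(1+x)^{-m-1}\bigl(mG_m+(1+x)G_m'\bigr).
\]
These differ by $2m(1+x)^{-m-1}G_m$. Already for $m=2$, $\frac{d}{dx}\bigl(x(1+x)^{-2}\bigr)=(1-x)(1+x)^{-3}$ has no zero in $(-1,0)$ at all, while $G_3=x(3x+1)$ vanishes at $-1/3$.

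The slip is the minus sign you carried over from Proposition~\ref{prop:fixedt}. There it came from writing $y(y+t-1)=-y(a-y)$. Here the factor is $x(x+1)$ itself, and matching $w'G_m+wG_m'$ with $\frac{w}{x(x+1)}\bigl(mxG_m+x(x+1)G_m'\bigr)$ forces $w'/w=+\frac{m}{1+x}$. Consequently every step that turns a zero or sign of $\frac{d}{dx}(wG_m)$ into a zero or sign of $G_{m+1}$ is unjustified as written. That covers the $m-3$ roots between consecutive $\rho_i$, the root in $(-1,\rho_1)$, and the root in $(\rho_{m-2},0)$.

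The correct weight is $v_m=(1+x)^m$, which is the one the paper uses. With it, the ``main obstacle'' you describe turns out to be an artifact of the sign error. The product $v_mG_m$ vanishes at $-1$ because $v_m$ does. So Rolle's theorem on the $m-1$ intervals $(-1,\rho_1),(\rho_1,\rho_2),\dots,(\rho_{m-2},0)$ (just $(-1,0)$ when $m=2$) immediately gives $m-1$ roots of $G_{m+1}$ in $(-1,0)$. Your degree count and your corrected expansion $G_{m+1}=x+O(x^2)$ then finish the proof, with no sign propagation at $-1$ needed.

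If you prefer to keep your endpoint computation, a weight-free repair also works. The values $G_{m+1}(\rho_i)=\rho_i(\rho_i+1)G_m'(\rho_i)$ alternate in sign along the simple roots $\rho_i$. One then checks, using $G_{m+1}(-1)=-mG_m(-1)$ and $G_{m+1}(x)\sim x$ as $x\to0^-$, that the signs at $-1,\rho_1,\dots,\rho_{m-2}$ and just left of $0$ alternate. This gives $m-1$ sign changes of $G_{m+1}$ on $(-1,0)$.

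Finally, your split $1+(m-3)+1$ does not make sense for the step $G_2\to G_3$, where there are no $\rho_i$. Your explicit computation of $G_3$ covers that case, but you should say so.
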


\begin{proof}
The proof is the limiting analogue of Proposition \ref{prop:fixedt}.  The
case $m=2$ is $G_2=x$.  Assume the result for $G_m$.  On $(-1,0)$ set
$v_m(x)=(1+x)^m$.  Then
\begin{equation}\label{eq:SturmG}
  \frac{d}{dx}\bigl(v_m(x)G_m(x)\bigr)
  =\frac{v_m(x)}{x(1+x)}G_{m+1}(x).
\end{equation}
The function $v_mG_m$ vanishes at $-1$, at the $m-2$ roots of $G_m$ in
$(-1,0)$, and at $0$.  Rolle's theorem gives $m-1$ distinct roots of
$G_{m+1}$ in $(-1,0)$.  The recurrence gives degree $m$ and a simple root
at $0$, so these roots account for the full degree and are simple.
\end{proof}

\begin{lemma}[Behavior as $t\to-\infty$]\label{lem:minusinfty}
For fixed $m\geq2$, every positive root $r_i^{(m)}(t)$ tends to $+\infty$
as $t\to-\infty$.  In particular, all positive roots lie above $1$ for all
sufficiently negative $t$.
\end{lemma}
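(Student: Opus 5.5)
The plan is to use the rescaling $H_{m,t}(x)=t^{-(m-1)}F_m(tx,t)$ together with Lemma~\ref{lem:Groots} and continuity of roots. Since $y=tx$ and $t<0$, a positive root $r$ of $F_m(\cdot,t)$ corresponds to the negative root $x=r/t$ of $H_{m,t}$. So it suffices to show the following: as $t\to-\infty$, the $m-2$ nonzero roots of $H_{m,t}$ stay bounded away from $0$. They should in fact converge to the $m-2$ roots of $G_m$ in $(-1,0)$. Then $r_i^{(m)}(t)=t\,x_i(t)$ with $x_i(t)\to\xi_i<0$, which gives $r_i^{(m)}(t)\to+\infty$. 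The second sentence of the lemma follows immediately.

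The steps, in order, are as follows.

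First, observe from Lemma~\ref{lem:Fdegree} that $H_{m,t}$ has degree $m-1$ with leading coefficient $(2m-3)!!$, independent of $t$. Its coefficients are polynomials in $1/t$, so they converge to those of $G_m$ as $t\to-\infty$. By Lemma~\ref{lem:Groots}, $G_m$ has the same degree and the same leading coefficient.

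Second, since $y=0$ is a simple root of $F_m$, $x=0$ is a simple root of $H_{m,t}$. Write $H_{m,t}(x)=x\,\widetilde H_{m,t}(x)$ and $G_m(x)=x\,\widetilde G_m(x)$. Then $\widetilde H_{m,t}\to\widetilde G_m$ coefficientwise, with constant leading coefficients. The constant term of $\widetilde G_m$ is $G_m'(0)\neq0$, because $0$ is a simple root of $G_m$.

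Third, apply continuity of roots of monic-normalized polynomials (Hurwitz's theorem, or an elementary bound) to conclude that the roots of $\widetilde H_{m,t}$ converge, as a multiset, to the roots of $\widetilde G_m$. These are $m-2$ simple points $\xi_1<\dots<\xi_{m-2}$ in $(-1,0)$.

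Fourth, identify the roots of $\widetilde H_{m,t}$ with the points $r_i^{(m)}(t)/t$. By Proposition~\ref{prop:fixedt}, these are exactly the $m-2$ nonzero roots of $H_{m,t}$, and they lie in $((1-t)/t,0)$. Because dividing by $t<0$ reverses order, $r_i^{(m)}(t)/t\to\xi_{m-1-i}$. Hence $r_i^{(m)}(t)=t\cdot(r_i^{(m)}(t)/t)\to(-\infty)\cdot\xi_{m-1-i}=+\infty$, since $\xi_{m-1-i}<0$.

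The only delicate point is to make sure no root of $\widetilde H_{m,t}$ drifts to $0$. This is handled in the third step by the nonvanishing of $\widetilde G_m(0)=G_m'(0)$. If I wanted to avoid the multiset continuity statement, I would argue directly: since $|\widetilde H_{m,t}(x)-\widetilde G_m(x)|\to0$ uniformly on compact sets, $\widetilde H_{m,t}$ is bounded away from zero on some fixed neighborhood $[-\delta,\delta]$ of $0$ for $t\ll0$. Every nonzero root $x$ of $H_{m,t}$ therefore satisfies $|x|\ge\delta$, so $r_i^{(m)}(t)\ge\delta|t|\to\infty$. This shorter argument already proves the lemma without tracking the individual limits.
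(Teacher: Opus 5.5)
Your proposal is correct and follows the paper's own proof. You rescale to $H_{m,t}$, pass to the coefficientwise limit $G_m$, and use Lemma~\ref{lem:Groots} with continuity of simple roots to get $r_i^{(m)}(t)/t\to\alpha_i\in(-1,0)$, hence $r_i^{(m)}(t)\to+\infty$. Your factorization $H_{m,t}=x\widetilde H_{m,t}$ with $\widetilde G_m(0)=G_m'(0)\neq0$ just makes explicit why no nonzero root can drift to $0$, which the paper leaves inside ``standard continuity of simple roots''; your shorter uniform-lower-bound variant is also valid and needs only $G_m'(0)\neq0$, not the location of the other roots of $G_m$.
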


\begin{proof}
The polynomials $H_{m,t}$ converge coefficientwise to $G_m$.  By Lemma
\ref{lem:Groots}, the nonzero roots of $G_m$ are simple and lie in
$(-1,0)$.  Standard continuity of simple roots under coefficientwise
convergence gives convergence of the nonzero roots of $H_{m,t}$ to these
roots.  If $r_i^{(m)}(t)$ is a positive root of $F_m(y,t)$, then
$r_i^{(m)}(t)/t$ is a negative root of $H_{m,t}$.  Hence
\[
  \frac{r_i^{(m)}(t)}{t}\longrightarrow \alpha_i
\]
for some $\alpha_i\in(-1,0)$.  Since $t\to-\infty$, this implies
$r_i^{(m)}(t)\to+\infty$.
\end{proof}

\section{Proof of real-rootedness and interlacing}

We now prove the main theorem.  Fix $n\geq4$ and put
\[
  m=n-1,
  \qquad d=n-3=m-2.
\]
Besides the trivial root $y=0$, the polynomial $F_m(y,t)$ has
$d=m-2$ positive root branches
\[
  0<r_1(t)<\cdots<r_d(t)<1-t.
\]
Only these positive branches are relevant for the crossing argument, since the
trivial branch $y=0$ never intersects the slice $y=1$.
By Lemma \ref{lem:nearzero}, all $r_i(t)$ are below $1$ when $t$ is close
to $0^-$.  By Lemma \ref{lem:minusinfty}, all $r_i(t)$ are above $1$ for
$t\ll0$.  Continuity of the ordered roots therefore implies that each branch
crosses the horizontal line $y=1$ at least once.  This step uses no monotonicity
of the branch in $t$.  By Proposition \ref{prop:slice}, these crossings are
exactly the zeros of
\[
  F_m(1,t)=P_{m+1}(t)=P_n(t).
\]
By \eqref{eqn:defP}, $P_n$ has degree $d$.  Since there are
$d$ branches and each branch crosses at least once, there can be exactly
one crossing on each branch and no other zeros.  Hence $P_n$ has $d=n-3$
distinct roots, all lying in $(-\infty,0)$.  This proves Theorem
\ref{thm:main}.

It remains to record the interlacing, which follows from the same geometry.
Let $\tau_i$ be the unique crossing time of the branch $r_i$, so that
\[
  r_i(\tau_i)=1,
  \qquad P_n(\tau_i)=0.
\]
The branch ordering implies
\begin{equation}\label{eq:tauorder}
  \tau_1<\tau_2<\cdots<\tau_d<0.
\end{equation}
Indeed, at time $\tau_i$ the branch $r_j$ with $j>i$ lies above $1$, and
since it is below $1$ near $0^-$ and crosses only once, its crossing time
must be larger than $\tau_i$.

At $t=\tau_i$, factor $F_m$ as a polynomial in $y$:
\begin{equation}\label{eq:factorF}
  F_m(y,\tau_i)=c_m\,y\prod_{k=1}^{d}\bigl(y-r_k(\tau_i)\bigr),
  \qquad c_m=(2m-3)!!>0.
\end{equation}
Since $r_i(\tau_i)=1$, the lower branches satisfy $r_k(\tau_i)<1$ for
$k<i$, while the upper branches satisfy $r_k(\tau_i)>1$ for $k>i$.  Thus
\begin{equation}\label{eq:Ssign}
  \operatorname{sgn}\,\partial_yF_m(1,\tau_i)=(-1)^{d-i}.
\end{equation}
Using the slope identity \eqref{eq:slope} with $m=n-1$ gives
\[
  \partial_yF_{n-1}(1,t)=P_n(t)+S_{n+1}(t).
\]
At a zero $\tau_i$ of $P_n$, this becomes
\begin{equation}\label{eq:Svalue}
  S_{n+1}(\tau_i)=\partial_yF_{n-1}(1,\tau_i),
\end{equation}
so $S_{n+1}(\tau_i)$ has the sign in \eqref{eq:Ssign}.  The recurrence for
$P_{n+1}$ gives
\begin{equation}\label{eq:PnextAtTau}
  P_{n+1}(\tau_i)=\tau_i S_{n+1}(\tau_i).
\end{equation}
Since $\tau_i<0$, the signs of $P_{n+1}(\tau_i)$ alternate as $i$ varies.
Therefore $P_{n+1}$ has a root in each interval $(\tau_i,\tau_{i+1})$.

There is also a root on each side.  At the right end,
$P_{n+1}(\tau_d)<0$ by \eqref{eq:Ssign}--\eqref{eq:PnextAtTau}, while
$P_{n+1}(0)=1$, so there is a root in $(\tau_d,0)$.  At the left end,
$P_{n+1}$ is monic of degree $d+1$, hence
\[
  \operatorname{sgn} P_{n+1}(t)=(-1)^{d+1}
  \qquad (t\ll0),
\]
whereas $P_{n+1}(\tau_1)$ has sign $(-1)^d$.  Hence there is a root in
$(-\infty,\tau_1)$.  We have found $d+1=\deg P_{n+1}$ distinct real roots,
with exactly one in each interval.  This proves Theorem
\ref{thm:interlacing-intro}.

\begin{corollary}\label{cor:logconcavity}
For every $n\geq3$, the Betti number sequence
\[
  b_{n,0},b_{n,1},\ldots,b_{n,n-3}
\]
of $\Mbarzn$ is ultra-log-concave:
\[
  \left(\frac{b_{n,i}}{\binom{n-3}{i}}\right)^2\geq \frac{b_{n,i-1}}{\binom{n-3}{i-1}} \frac{b_{n,i+1}}{\binom{n-3}{i+1}}
  \qquad (1\leq i\leq n-4).
\]
\end{corollary}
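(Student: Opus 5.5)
The plan is to deduce the corollary from Theorem \ref{thm:main} by Newton's inequalities. For $n=3$ there is nothing to check, since the range $1\le i\le n-4$ is empty. The same holds for $n=4$, where $P_4=1+t$.

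So fix $n\geq 5$ and put $d=n-3$. By Theorem \ref{thm:main}, $P_n(t)=\sum_{i=0}^{d}b_{n,i}t^i$ factors over $\mathbb R$ as
\[
  P_n(t)=\prod_{k=1}^{d}(t-\tau_k),\qquad \tau_k<0,
\]
because $P_n$ is monic. Writing $\rho_k=-\tau_k>0$, we get
\[
  P_n(t)=\prod_{k=1}^{d}(t+\rho_k),
\]
so $b_{n,d-i}=e_i(\rho_1,\dots,\rho_d)$ is the $i$-th elementary symmetric polynomial in positive reals. In particular every $b_{n,i}$ is strictly positive, so the quotients in the statement are well defined and nonzero.

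Newton's inequalities for the normalized elementary symmetric means $E_i=e_i/\binom{d}{i}$ of real numbers give
\[
  E_i^2\geq E_{i-1}E_{i+1},\qquad 1\le i\le d-1.
\]
Reindex by $i\mapsto d-i$ and use $\binom{d}{i}=\binom{d}{d-i}$. The inequality then becomes exactly
\[
  \left(\frac{b_{n,i}}{\binom{d}{i}}\right)^2\geq\frac{b_{n,i-1}}{\binom{d}{i-1}}\frac{b_{n,i+1}}{\binom{d}{i+1}}
\]
for $1\le i\le d-1=n-4$. Alternatively one can avoid reindexing entirely. Palindromicity $b_{n,i}=b_{n,d-i}$ (Poincar\'e duality, or the symmetry of the roots under $\rho\mapsto1/\rho$) shows the reversed sequence is the same one.

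If one prefers a self-contained proof of Newton's inequality rather than citing it, the standard route works. Differentiate $P_n$ repeatedly and apply the reversal $t^{d}P(1/t)$; both operations preserve real-rootedness by Rolle's theorem. This reduces the claim to a real-rooted quadratic $\binom{2}{0}E_{i-1}+\binom21E_it+\binom22E_{i+1}t^2$, whose discriminant is nonnegative.

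No step is a genuine obstacle. The only care needed is to match the indexing and normalizing binomials to the statement, and to note that the coefficients are positive so the ratios make sense.
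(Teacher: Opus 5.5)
Your proposal is correct and follows the same route as the paper: it deduces the corollary from the real-rootedness in Theorem~\ref{thm:main} together with Newton's inequalities. The only difference is minor: the paper cites nonnegativity of the coefficients, while you obtain strict positivity from the negativity of the roots, and your treatment of the empty cases $n=3,4$ and of the reindexing $i\mapsto d-i$ via $\binom{d}{i}=\binom{d}{d-i}$ spells out bookkeeping the paper leaves implicit.
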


\begin{proof}
The coefficients of $P_n$ are nonnegative, either by their cohomological
meaning or directly from the recurrence. The ultra-log-concavity follows directly from the real-rootedness together with
Newton's inequalities.
\end{proof}

\begin{remark}[Higher ultra-log-concavities]
A sequence of positive integers 
\[a_0,a_1,\cdots, a_n\]
is called $r$-\emph{ultra-log-concave} (r-\emph{ULC} for short) if 
\[\left( \frac{a_i}{\binom{n}{i}^r}\right)^2\ge \frac{a_{i-1}}{\binom{n}{i-1}^r} \frac{a_{i+1}}{\binom{n}{i+1}^r} \qquad (1\leq i\leq n-1).\]
It is obvious that $r$-ULC implies $(r-1)$-ULC for $r\geq 1$ and $0$-ULC is the ordinary log-concavity. 
Based on numerical computations, we expect that the Betti sequence $\{b_{n,i}\,|\, 0\leq i\leq n-3\}$ is in fact $2$-ULC.
Furthermore, it was proved in \cite[Corollary 3.4]{ChoiKiem2026} that the Betti sequence is \emph{asymptotically} $3$-ULC but not $4$-ULC.
As the roots of a polynomial determine the coefficients, these higher ULCs indicate that the locations of zeros of $P_n(t)$ may have certain patterns which warrant further investigations.  
\end{remark}

\begin{remark}[Refined log-concavities]
The log-concavity in Corollary \ref{cor:logconcavity} can be refined by considering the action of the symmetric group $S_n$ on $\Mbarzn$ by permuting the marked points. For each $0\le i\le n-3$, we can decompose the $S_n$-module $H^{2i}(\Mbarzn)$ into a direct sum of irreducible representations and ask whether the multiplicities $\mathrm{mult}_\lambda H^{2i}(\Mbarzn)$ of an irreducible representation $\lambda$ of $S_n$ form a log-concave sequence or not (cf. \cite[Conjecture 1.6]{ChoiKiemLee2024}). 

Of particular interest is the invariant part \[H^{2k}(\Mbarzn)^{S_n}=H^{2k}(\Mbarzn/S_n)\]
which is the cohomology of the moduli space $\Mbarzn/S_n$ of stable curves of genus 0 with $n$ \emph{unordered} marked points. It was proved in \cite[Corollary 6.10 and Corollary 6.11]{ChoiKiemLee2024} that the Betti sequence of $\Mbarzn/S_n$ is asymptotically log-concave but not ultra-log-concave. In particular, the Poincar\'e polynomial of $\Mbarzn/S_n$ is not real-rooted. This indicates that a new approach is required for refined log-concavities.
\end{remark}

\section{Fulton--MacPherson compactification of the configuration space}
A closely related space to $\Mbarzn$ is the Fulton--MacPherson compactification $\PP^1[n]$ (\emph{FM space} for short) of the configuration space \[(\PP^1)^n-\Delta,\qquad \Delta=\bigcup_{i\ne j}\Delta_{ij}\]
of $n$ ordered distinct points in the projective line $\PP^1$ where $\Delta_{ij}$ denotes the locus where $i$th and $j$th points coincide. See \cite{FultonMacpherson1994} and \cite[\S4.3]{Manin1999}. 
The FM space $\PP^1[n]$ is the wonderful compactification of the configuration space which is obtained by a sequence of blow-ups from $(\PP^1)^n$ and is isomorphic to the moduli space of stable maps to $\PP^1$ of degree 1:
\[\PP^1[n]\cong \bcM_{0,n}(\PP^1,1).\]
There is a canonical choice of an ample line bundle and $\Mbarzn$ is the geometric invariant theory quotient of $\PP^1[n]$ by the action of $\mathrm{Aut}(\PP^1)$. 

By construction, $\PP^1[n]$ is a smooth projective variety of dimension $n$ with vanishing odd cohomology and its Betti sequence
\[\bt_{n,i}= \dim H^{2i}(\PP^1[n])\qquad (0\le i\le n)\]
is unimodal and palindromic. 

By the same technique we used for $\Mbarzn$, we obtain the following.
\begin{theorem}\label{thm:FM}
    The Poincar\'e polynomial $\Pt_n(t)=\sum_{i}\bt_{n,i}t^i$ of $\PP^1[n]$ has only negative real roots for $n\ge 1$.
\end{theorem}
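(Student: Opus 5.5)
The plan is to repeat the architecture of Sections 2--4: find a generating-function identity for $\Pt_n$, lift it to a bivariate recursion of Sturm type, and run the branch-crossing argument at the slice $y=1$.

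\textbf{Step 1: a generating function for $\Pt_n$.} I would stratify $\PP^1[n]$ by the configuration on the main (non-contracted) component. A point of $\PP^1[n]$ is determined by three pieces of data:
\begin{itemize}
\item a set partition of $\{1,\dots,n\}$ into $k$ blocks;
\item a configuration of $k$ distinct points on $\PP^1$;
\item for each block $B$ with $|B|\ge2$, a point in the fibre of $\PP^1[|B|]$ over the small diagonal.
\end{itemize}
That fibre is a screen space: a weighted projective or affine bundle over $\overline{\mathcal M}_{0,|B|+1}$, contributing a factor like $t\,P_{|B|+1}(t)$ up to an explicit affine factor.

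Since all strata are paved and odd cohomology vanishes, I would compute with virtual Poincaré polynomials (Grothendieck classes with $\mathbb L\mapsto t$). The class of the configuration space of $k$ points is $(1+t)t(t-1)\cdots(t-k+2)$. Summing over partitions in exponential generating function form gives something of the shape
\[
\sum_{n\ge0}\Pt_n(t)\frac{x^n}{n!}=\bigl(1+g(x,t)\bigr)^{1+t}\Big|_{\text{falling-factorial form}},
\qquad g=\text{an explicit series in }x+U(x,t).
\]
Combined with Getzler's parametrization \eqref{eq:param}, the relevant series is expressible through $\varphi=z-1$, and I expect a closed form in $z$ of the type $z^{1+t}$ times a rational factor. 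I would check this against $\Pt_1=1+t$ and $\Pt_2=(1+t)^2$.

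\textbf{Step 2: the residual deformation.} Using Lemma~\ref{lem:PDE} and the characteristic computation in the proof of Proposition~\ref{prop:slice}, I would look for polynomials $\Ft_m(y,t)$ satisfying a first-order recursion of the same form as \eqref{eq:Frec}:
\[
\Ft_{m+1}=(\alpha_m y+\beta_m)\Ft_m+y(y+t-1)\partial_y\Ft_m .
\]
The coefficients $\alpha_m,\beta_m$ would be shifted so that the slice $\Ft_m(1,t)$ recovers $\Pt_n(t)$, possibly after dividing out a trivial factor such as $(1+t)$. This is the ``residual'' part: the $\mathrm{Aut}(\PP^1)$-direction should appear as a modified source term in the PDE \eqref{eq:PhiPDE}, equivalently a modified Sturm weight $w_m$ with exponents still positive at $y=0$ and $y=1-t$.

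\textbf{Step 3: the crossing argument.} With this in hand, the proof of Proposition~\ref{prop:fixedt} goes through verbatim: the weighted derivative identity plus Rolle gives all roots simple and in $[0,1-t)$. The analogues of Lemmas~\ref{lem:nearzero}--\ref{lem:minusinfty} give the endpoint behaviour:
\begin{itemize}
\item near $t=0^-$, all branches lie below $y=1$, since $\Pt_n(0)=1\neq0$;
\item as $t\to-\infty$, the rescaled limit $G$-recursion has roots in $(-1,0)$, so all branches tend to $+\infty$.
\end{itemize}
The degree count, with $\deg\Pt_n=n$, then forces exactly one crossing per branch, giving $n$ distinct negative roots. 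Any trivial factor such as $(1+t)$ contributes the root $-1$ directly.

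\textbf{Main obstacle.} I expect the hard step to be Step 2, namely identifying the correct deformation $\Ft_m$. Concretely, one must:
\begin{itemize}
\item choose the recursion so that its $y=1$ slice matches the generating function from Step 1;
\item ensure the number of positive $y$-roots equals $\deg\Pt_n$ minus the number of trivially known roots;
\item keep positive weight exponents at both endpoints.
\end{itemize}
My first attempt would be to take $\Ft_m=\partial$-type combinations of $F_m$ (for instance $F_{m+1}$ with the first coefficient $my-m+1$ replaced by $my-m$, or $(m+1)y-m$). I would test each candidate symbolically for small $n$ against the Step 1 formula before attempting the general characteristic computation.
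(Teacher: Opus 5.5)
There is a genuine gap. Your Steps 1 and 3 follow the paper's outline, but Step 2, which you rightly call the main obstacle, is left as a search, and the candidates you list cannot work. If $\Ft_1$ is independent of $y$, any recursion $\Ft_{m+1}=(\alpha_m y+\beta_m)\Ft_m+y(y+t-1)\partial_y\Ft_m$ gives $\Ft_2(1,t)=(\alpha_1+\beta_1)\Ft_1(1,t)$. But $\Pt_2/\Pt_1=(1+t)^2/(1+t)=1+t$, so the multiplier must involve $t$. Both $my-m$ and $(m+1)y-m$ therefore fail already at $n=2$.

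The missing idea is that no new ansatz is needed: transport the geometric identity to the deformation you already have. Your Step 1 is a reasonable way to re-derive what the paper cites from Manin, namely $\psi=(1+\varphi)^{t+1}$ with $\varphi=x+U$. One correction there: the fibre of $\PP^1[m]\to(\PP^1)^m$ over the small diagonal is $\overline{\mathcal M}_{0,m+1}$ itself, so it contributes $P_{m+1}$, not $tP_{m+1}$.

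Now set $\Psi:=(1+\Phi)^{t+1}$. Its slice at $y=1$ is $\psi$ by Proposition~\ref{prop:slice}. Since $L=(1-(y-1)x)\partial_x-y(y+t-1)\partial_y$ is a derivation, $L\Psi=(t+1)(1+\Phi)^{t}L\Phi=(t+1)\Psi$. Every coefficient of $\Psi-1=\sum_{k\ge1}\binom{t+1}{k}\Phi^k$ is divisible by $t+1$. Hence $\widehat\Psi=(\Psi-1)/(t+1)$ satisfies $L\widehat\Psi=1+(t+1)\widehat\Psi$, which is exactly
\[
  \widehat F_{n+1}=(ny-n+t+1)\widehat F_n+y(y+t-1)\partial_y\widehat F_n,\qquad \widehat F_1=1.
\]
The Sturm weight is $y^{\alpha_n}(a-y)^{\beta_n}$ with $\alpha_n=(n+a-2)/a$ and $\beta_n=((n-1)a-n+2)/a$, both positive for $a=1-t>1$.

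Several points of your Step 3 then have to change rather than be copied verbatim.
\begin{enumerate}
\item Dividing by $t+1$ is forced, not optional. Since $\Ft_n(\cdot,-1)\equiv0$, without it the fixed-$t$ root count and the continuity of the branches break down at $t=-1$.
\item Unlike $F_m$, the residual $\widehat F_n$ has no root at $y=0$; indeed $\widehat F_n(0,t)=\prod_{k=1}^{n-1}(t+1-k)\neq0$. So Rolle gives $n-1$ simple roots in $(0,1-t)$, and these must be matched against $\deg\widehat P_n=n-1$, not against $\deg\Pt_n=n$.
\item The rescaled limit as $t\to-\infty$ is not the $G_m$ recursion but $K_{n+1}=(nx+1)K_n+x(x+1)K_n'$. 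Its roots are $x=-1$ together with $n-2$ points of $(-1,0)$. All of them are negative, so the branches still tend to $+\infty$.
\item Your conclusion ``$n$ distinct negative roots'' is false. $\widehat P_n$ is palindromic, so for even $n$ it has odd degree and vanishes at $-1$. For example, $\Pt_2=(1+t)^2$ and $\Pt_4=(1+t)^2(t^2+7t+1)$. The crossing argument gives $n-1$ distinct roots of $\widehat P_n$, and the removed factor $t+1$ may duplicate one of them. This is why the theorem asserts only that the roots are real and negative.
\end{enumerate}
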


\begin{proof}
Since the proof parallels that of Theorem \ref{thm:main}, we only provide an outline here. Let 
\[\varphi(x,t)=\sum_{m\ge 1}\frac{x^m}{m!}P_{m+1}(t),\quad \psi(x,t)=1+\sum_{n\ge 1} \frac{x^n}{n!}\Pt_n(t).\]
Then by \cite[IV. (4.24)]{Manin1999}, we have
\[\psi=(1+\varphi)^{t+1}
\]
which is equivalent to the formula (cf. \cite[\S4]{ChoiKiem2026})
\begin{equation}\label{eq:Ptil}
\Pt_n(t)=\sum_{j=0}^n\binom{n}{j}t^{\min\{n-j,2\}}P_{j+1}(t)P_{n+1-j}(t)
\end{equation}
with the convention $P_1=P_2=1$. Geometrically, \eqref{eq:Ptil} is a direct consequence of 
the wall crossings of moduli spaces of $\delta$-stable maps to the quotient stack $[\mathbb{C}^2/\mathbb{C}^*]$ (cf. \cite[\S6.2]{ChoiKiemLee2023}).

The deformations $\Ft_n(y,t)$ and $\Psi(x,y,t)$ of $\Pt_n(t)$ and $\psi(x,t)$ respectively are defined by   
\[\Psi(x,y,t)=(1+\Phi(x,y,t))^{t+1}=1+\sum_{n\ge 1}\frac{x^n}{n!}\Ft_n(y,t)\]
using the deformation \eqref{eq:PhiDef} of $\varphi$. 

The coefficients $\widetilde F_n(y,t)$ are all divisible by $t+1$.  Indeed,
\[
  (1+\Phi)^{t+1}-1
  =
  \sum_{k\ge1}\binom{t+1}{k}\Phi^k,
\]
and each binomial coefficient $\binom{t+1}{k}=\frac{(t+1)t(t-1)\cdots(t-k+2)}{k!}$ contains the factor $t+1$.
Thus the zero locus of $\widetilde F_n$ contains the extraneous vertical
component $t=-1$.  We remove this common component and work with the residual
deformation
\[
  \widehat F_n(y,t):=\frac{\widetilde F_n(y,t)}{t+1}.
\]
Equivalently,
\[
  \widehat\Psi(x,y,t)
  :=
  \frac{(1+\Phi(x,y,t))^{t+1}-1}{t+1}
  =
  \sum_{n\ge1}\widehat F_n(y,t)\frac{x^n}{n!}.
\]
On the slice $y=1$, this gives
\[
  \widehat F_n(1,t)=\widehat P_n(t),
  \qquad
  \widehat P_n(t):=\frac{\widetilde P_n(t)}{t+1}.
\]
Thus it suffices to prove that $\widehat P_n(t)$ has only negative real
roots; the removed factor $t+1$ then contributes the additional root
$t=-1$ of $\widetilde P_n(t)$.

We next record the recurrence satisfied by the residual deformation.  Since $\Psi=(1+\Phi)^{t+1}$
and $\Phi$ satisfies
\[
  \bigl(1-(y-1)x\bigr)\partial_x\Phi
  -
  y(y+t-1)\partial_y\Phi
  =
  1+\Phi,
\]
we get
\[
  \bigl(1-(y-1)x\bigr)\partial_x\Psi
  -
  y(y+t-1)\partial_y\Psi
  =
  (t+1)\Psi .
\]
Comparing coefficients and dividing by $t+1$, the polynomials
$\widehat F_n$ satisfy
\begin{equation}\label{eq:FM-residual-recurrence}
  \widehat F_{n+1}(y,t)
  =
  (ny-n+t+1)\widehat F_n(y,t)
  +
  y(y+t-1)\partial_y\widehat F_n(y,t),
  \qquad
  \widehat F_1(y,t)=1.
\end{equation}

Fix $t<0$, and put $a=1-t$.  Then \eqref{eq:FM-residual-recurrence}
becomes
\[
  \widehat F_{n+1}
  =
  (ny-n+2-a)\widehat F_n
  -
  y(a-y)\partial_y\widehat F_n .
\]
For $0<y<a$, define
\[
  w_n(y)
  =
  y^{\alpha_n}(a-y)^{\beta_n},
  \qquad
  \alpha_n=\frac{n+a-2}{a},
  \qquad
  \beta_n=\frac{(n-1)a-n+2}{a}.
\]
Both exponents are positive.  A direct calculation gives the Sturm identity
\begin{equation}\label{eq:FM-residual-Sturm}
  \frac{d}{dy}\bigl(w_n(y)\widehat F_n(y,t)\bigr)
  =
  -
  \frac{w_n(y)}{y(a-y)}
  \widehat F_{n+1}(y,t),
\end{equation}
exactly as in the proof of the Sturm theorem for $F_m(y,t)$, Rolle's theorem
applied to \eqref{eq:FM-residual-Sturm} shows by induction that, for every
$t<0$, the polynomial $\widehat F_n(\,\cdot\,,t)$ has $n-1$ simple roots,
all lying in the interval $0<y<1-t$.
We denote them by
\[
  0<r_1(t)<\cdots<r_{n-1}(t)<1-t.
\]
Since the roots are simple, these ordered roots depend continuously on $t<0$. The recurrence also shows inductively that
$\deg_y \widehat F_n=n-1$ and that the leading coefficient is
$(2n-3)!!>0$ for $n\ge2$.

It remains to compare the branches $r_i(t)$ with the slice $y=1$.  First,
as $t\to0^{-}$, no branch can stay at or above $y=1$.  Otherwise, after
passing to a subsequence, one would obtain a limiting zero of
$\widehat F_n(y,0)$ at $y=1$.  But
\[
  \widehat F_n(1,0)=\widehat P_n(0)=1,
\]
a contradiction.  Hence all branches lie below $y=1$ for $t<0$ sufficiently
close to $0$.

Second, as $t\to-\infty$, all branches go to $+\infty$.  This follows by
the same scaling argument used earlier.  Namely, define
\[
  H_{n,t}(x)=t^{-(n-1)}\widehat F_n(tx,t).
\]
Then $H_{n,t}$ converges coefficientwise to the polynomial $K_n(x)$ defined
by
\[
  K_1(x)=1,\qquad
  K_{n+1}(x)=(nx+1)K_n(x)+x(x+1)K_n'(x).
\]
For $n\ge2$, the same Sturm--Rolle argument on $(-1,0)$ shows that
$K_n$ has $n-1$ simple roots in $[-1,0)$, one at $-1$ and the
remaining $n-2$ in $(-1,0)$. Hence the roots of $H_{n,t}$ converge to negative limits, and
therefore the corresponding positive roots $r_i(t)$ of
$\widehat F_n(y,t)$ tend to $+\infty$ as $t\to-\infty$.

Thus each continuous branch $r_i(t)$ lies below $y=1$ near $t=0^{-}$ and
above $y=1$ for $t\ll0$.  Therefore each branch crosses the slice $y=1$
at least once.  These crossing times are exactly the zeros of
\[
  \widehat F_n(1,t)=\widehat P_n(t).
\]
Moreover, at a fixed $t<0$, the roots of
$\widehat F_n(\,\cdot\,,t)$ are simple and distinct, so two different
branches cannot cross $y=1$ at the same time.

Consequently $\widehat P_n(t)$ has at least $n-1$ distinct negative real
roots.  Since $\mathbb P^1[n]$ is smooth projective of dimension $n$,
$\widetilde P_n(t)$ has degree $n$ and leading coefficient $1$.  Therefore $\widehat P_n(t)=\frac{\widetilde P_n(t)}{t+1}$
has degree $n-1$.  The $n-1$ negative roots found above are all its roots.
Finally, the removed factor $t+1$ contributes the additional negative root
$t=-1$.  Hence $\widetilde P_n(t)$ has only negative real roots.
\end{proof}

By Newton's inequalities again, we have the following. 
\begin{corollary}\label{cor:FM}
     The Betti sequence $\{\bt_{n,i}\}_{0\le i\le n}$ of $\PP^1[n]$ is ultra-log-concave. 
\end{corollary}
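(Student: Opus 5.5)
The plan is to combine Theorem \ref{thm:FM} with Newton's inequalities, exactly as in the proof of Corollary \ref{cor:logconcavity}. The only additional inputs needed are that the coefficients $\bt_{n,i}$ are nonnegative and that $\Pt_n$ has the full degree $n$.

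\emph{Step 1: the coefficients and the degree.} I will record that $\bt_{n,i}=\dim H^{2i}(\PP^1[n])\ge0$. Moreover $\bt_{n,0}=\bt_{n,n}=1$, because $\PP^1[n]$ is smooth, projective and irreducible of dimension $n$. Hence $\Pt_n$ has degree exactly $n$ and $\Pt_n(0)=1$. In fact every $\bt_{n,i}$ with $0\le i\le n$ is positive. This follows from the hard Lefschetz theorem, since powers of an ample class are nonzero in each degree. It can also be seen from \eqref{eq:Ptil}, because the $P_j$ have positive coefficients in all degrees $0,\dots,j-3$. Positivity ensures the ratios in the inequality are meaningful and there are no internal zeros. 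It is not strictly required, however, since Newton's inequalities hold for arbitrary real coefficients.

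\emph{Step 2: Newton's inequalities.} By Theorem \ref{thm:FM}, we can write $\Pt_n(t)=\prod_{k=1}^n(t+\rho_k)$ with all $\rho_k>0$. Then $\bt_{n,i}=e_{n-i}(\rho_1,\dots,\rho_n)$, the elementary symmetric polynomial. Newton's inequalities for real numbers give
\[
\left(\frac{e_k}{\binom nk}\right)^2\ge \frac{e_{k-1}}{\binom n{k-1}}\frac{e_{k+1}}{\binom n{k+1}}.
\]
Substituting $k=n-i$ and using $\binom{n}{n-i}=\binom ni$ yields the ultra-log-concavity
\[
\left(\frac{\bt_{n,i}}{\binom ni}\right)^2\ge\frac{\bt_{n,i-1}}{\binom n{i-1}}\frac{\bt_{n,i+1}}{\binom n{i+1}}, \qquad 1\le i\le n-1.
\]

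There is no real obstacle here. The substantive content, real-rootedness, is supplied by Theorem \ref{thm:FM}. The one point to check is that the inequality is indexed by binomials $\binom ni$ of the true degree $n$, which Step 1 secures. The case $n=1$ is vacuous.
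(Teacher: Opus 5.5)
Your proposal is correct and follows essentially the same approach as the paper, which deduces the corollary directly from Theorem \ref{thm:FM} via Newton's inequalities. The additional checks you spell out (nonnegativity of the coefficients, degree exactly $n$, and the reindexing $k=n-i$) are the details the paper's one-line argument leaves implicit.
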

By \cite[Corollary 4.2]{ChoiKiem2026}, the Betti sequence is asymptotically 3-ULC. 
\\
\\

\begin{figure}[H]
\centering

\begin{tikzpicture}[scale=0.7]
\begin{axis}[
    width=13.4cm,
    height=8.4cm,
    view={58}{24},
    axis lines=box,
    xlabel={$t$},
    ylabel={$y$},
    zlabel={$z$},
    xmin=-15.5, xmax=0.1,
    ymin=0, ymax=11.4,
    zmin=-500, zmax=500,
    xtick={-15,-10,-5,-1,0},
    ytick={0,1,2,4,6,8,10},
    ztick={-500,0,500},
    ticklabel style={font=\small},
    label style={font=\small},
    clip=false
]

\addplot3[
    surf,
    opacity=0.24,
    draw=cyan!55!black,
    fill=cyan!25,
    shader=flat,
    samples=18,
    samples y=16,
    domain=-15:0,
    y domain=0:11.2,
    restrict z to domain=-500:500,
    unbounded coords=jump
]
{ y*(x^3 + 25*x^2*y - 9*x^2 + 105*x*y^2 - 115*x*y + 26*x
      + 105*y^3 - 210*y^2 + 130*y - 24) };

\addplot3[
    surf,
    shader=flat,
    draw=red!45!black,
    fill=red!25,
    opacity=0.24,
    samples=2,
    samples y=2,
    domain=-15:0,
    y domain=-500:500
]
({x},{1},{y});

\addplot3[
    very thick,
    black,
    smooth
] coordinates {
(-15.00000,1.00331744,0)
(-14.95000,1.00084045,0)
(-14.93303,0.99999978,0)
(-14.90000,0.99836358,0)
(-14.00000,0.95380346,0)
(-12.00000,0.85497487,0)
(-10.00000,0.75652903,0)
(-8.00000,0.65871135,0)
(-6.00000,0.56203542,0)
(-4.00000,0.46779183,0)
(-3.00000,0.42264973,0)
(-2.00000,0.38046437,0)
(-1.50000,0.36151662,0)
(-1.10000,0.34826667,0)
(-1.00000,0.34534633,0)
(-0.90000,0.34263097,0)
(-0.70000,0.33795759,0)
(-0.50000,0.33463025,0)
(-0.20000,0.33368935,0)
(-0.10000,0.33509702,0)
(-0.07000,0.33576383,0)
(-0.06697,0.33583837,0)
(-0.05000,0.33628162,0)
(-0.02000,0.33717857,0)
(-0.01000,0.33751195,0)
};

\addplot3[
    very thick,
    black,
    smooth
] coordinates {
(-15.00000,5.03437121,0)
(-14.95000,5.01997306,0)
(-14.93303,5.01508632,0)
(-14.90000,5.00557489,0)
(-14.00000,4.74640480,0)
(-12.00000,4.17044659,0)
(-10.00000,3.59444121,0)
(-8.00000,3.01836227,0)
(-6.00000,2.44215949,0)
(-4.00000,1.86572519,0)
(-3.00000,1.57735027,0)
(-2.00000,1.28879613,0)
(-1.50000,1.14442795,0)
(-1.10000,1.02888870,0)
(-1.00000,1.00000000,0)
(-0.90000,0.97111134,0)
(-0.70000,0.91334103,0)
(-0.50000,0.85560194,0)
(-0.20000,0.76920357,0)
(-0.10000,0.74055588,0)
(-0.07000,0.73199228,0)
(-0.06697,0.73112839,0)
(-0.05000,0.72629394,0)
(-0.02000,0.71776576,0)
(-0.01000,0.71492897,0)
};

\addplot3[
    very thick,
    black,
    smooth
] coordinates {
(-15.00000,10.96231135,0)
(-14.95000,10.92918650,0)
(-14.93303,10.91794390,0)
(-14.90000,10.89606153,0)
(-14.00000,10.29979174,0)
(-12.00000,8.97457853,0)
(-10.00000,7.64902976,0)
(-8.00000,6.32292639,0)
(-6.00000,4.99580508,0)
(-4.00000,3.66648298,0)
(-3.00000,3.00000000,0)
(-2.00000,2.33073951,0)
(-1.50000,1.99405543,0)
(-1.10000,1.72284462,0)
(-1.00000,1.65465367,0)
(-0.90000,1.58625769,0)
(-0.70000,1.44870138,0)
(-0.50000,1.30976782,0)
(-0.20000,1.09710708,0)
(-0.10000,1.02434710,0)
(-0.07000,1.00224388,0)
(-0.06697,1.00000324,0)
(-0.05000,0.98742444,0)
(-0.02000,0.96505567,0)
(-0.01000,0.95755908,0)
};

\addplot3[
    only marks,
    mark=*,
    mark size=2.8pt,
    color=orange!95!black,
    mark options={fill=orange!95!black, draw=orange!95!black}
] coordinates {
(-14.93303437,1,0)
(-1.00000000,1,0)
(-0.06696563,1,0)
};

\end{axis}
\end{tikzpicture}

\vspace{0.6em}

\begin{tabular}{@{}ll@{\qquad}ll@{}}

\tikz[baseline=-0.6ex]{
  \path[
    draw=cyan!55!black,
    fill=cyan!25,
    opacity=0.55
  ] (0,0) -- (0.65,0.08) -- (0.75,0.30) -- (0.10,0.22) -- cycle;
}
&
graph of $z=F_5(y,t)$
&
\tikz[baseline=-0.6ex]{
  \path[
    draw=red!45!black,
    fill=red!25,
    opacity=0.55
  ] (0,0) -- (0.65,0.08) -- (0.75,0.30) -- (0.10,0.22) -- cycle;
}
&
slice plane $y=1$
\\[0.5em]

\tikz[baseline=-0.5ex]{
  \draw[black,very thick] (0,0)--(0.75,0);
}
&
positive zero branches $F_5(y,t)=0$
&
\tikz[baseline=-0.5ex]{
  \fill[orange!95!black] (0.35,0) circle[radius=2.5pt];
}
&
intersections $(\tau_i,1,0)$ with $y=1$

\end{tabular}

\begin{tikzpicture}[scale=0.7]
\begin{axis}[
    width=13.2cm,
    height=8.0cm,
    xmin=-15.5, xmax=0.1,
    ymin=0, ymax=11.4,
    axis lines=left,
    xlabel={$t$},
    ylabel={$y$},
    xtick={-15,-10,-5,-1,0},
    ytick={0,1,2,4,6,8,10},
    ticklabel style={font=\small},
    label style={font=\small},
    grid=both,
    major grid style={gray!18},
    minor grid style={gray!10},
    clip=false
]

\addplot[
    dashed,
    very thick,
    red!70!black,
    domain=-15.5:0.1,
    samples=2
] {1};

\addplot[
    very thick,
    black,
    smooth
] coordinates {
(-15.00000,1.00331744)
(-14.95000,1.00084045)
(-14.93303,0.99999978)
(-14.90000,0.99836358)
(-14.00000,0.95380346)
(-12.00000,0.85497487)
(-10.00000,0.75652903)
(-8.00000,0.65871135)
(-6.00000,0.56203542)
(-4.00000,0.46779183)
(-3.00000,0.42264973)
(-2.00000,0.38046437)
(-1.50000,0.36151662)
(-1.10000,0.34826667)
(-1.00000,0.34534633)
(-0.90000,0.34263097)
(-0.70000,0.33795759)
(-0.50000,0.33463025)
(-0.20000,0.33368935)
(-0.10000,0.33509702)
(-0.07000,0.33576383)
(-0.06697,0.33583837)
(-0.05000,0.33628162)
(-0.02000,0.33717857)
(-0.01000,0.33751195)
};

\addplot[
    very thick,
    black,
    smooth
] coordinates {
(-15.00000,5.03437121)
(-14.95000,5.01997306)
(-14.93303,5.01508632)
(-14.90000,5.00557489)
(-14.00000,4.74640480)
(-12.00000,4.17044659)
(-10.00000,3.59444121)
(-8.00000,3.01836227)
(-6.00000,2.44215949)
(-4.00000,1.86572519)
(-3.00000,1.57735027)
(-2.00000,1.28879613)
(-1.50000,1.14442795)
(-1.10000,1.02888870)
(-1.00000,1.00000000)
(-0.90000,0.97111134)
(-0.70000,0.91334103)
(-0.50000,0.85560194)
(-0.20000,0.76920357)
(-0.10000,0.74055588)
(-0.07000,0.73199228)
(-0.06697,0.73112839)
(-0.05000,0.72629394)
(-0.02000,0.71776576)
(-0.01000,0.71492897)
};

\addplot[
    very thick,
    black,
    smooth
] coordinates {
(-15.00000,10.96231135)
(-14.95000,10.92918650)
(-14.93303,10.91794390)
(-14.90000,10.89606153)
(-14.00000,10.29979174)
(-12.00000,8.97457853)
(-10.00000,7.64902976)
(-8.00000,6.32292639)
(-6.00000,4.99580508)
(-4.00000,3.66648298)
(-3.00000,3.00000000)
(-2.00000,2.33073951)
(-1.50000,1.99405543)
(-1.10000,1.72284462)
(-1.00000,1.65465367)
(-0.90000,1.58625769)
(-0.70000,1.44870138)
(-0.50000,1.30976782)
(-0.20000,1.09710708)
(-0.10000,1.02434710)
(-0.07000,1.00224388)
(-0.06697,1.00000324)
(-0.05000,0.98742444)
(-0.02000,0.96505567)
(-0.01000,0.95755908)
};

\addplot[
    only marks,
    mark=*,
    mark size=2.8pt,
    color=orange!95!black,
    mark options={fill=orange!95!black, draw=orange!95!black}
] coordinates {
(-14.93303437,1)
(-1.00000000,1)
(-0.06696563,1)
};

\end{axis}
\end{tikzpicture}

\vspace{0.6em}

\begin{tabular}{@{}ll@{\qquad}ll@{}}
\tikz[baseline=-0.5ex]{\draw[black,very thick] (0,0)--(0.75,0);}
&
positive zero branches $F_5(y,t)=0$
&
\tikz[baseline=-0.5ex]{\draw[red!70!black,dashed,very thick] (0,0)--(0.75,0);}
&
slice $y=1$
\\[0.45em]
\tikz[baseline=-0.5ex]{\fill[orange!95!black] (0.35,0) circle[radius=2.5pt];}
&
intersection points $(\tau_i,1)$
&
&
\end{tabular}

\caption{The deformation
\[
F_5(y,t)=
y\bigl(
t^3+25t^2y-9t^2+105ty^2-115ty+26t
+105y^3-210y^2+130y-24
\bigr).
\]
In the 3D plot the cyan surface is the graph $z=F_5(y,t)$, restricted near $z=0$ for
visibility.  The red plane is the slice $y=1$.  The black curves are the three positive branches of the zero set $F_5(y,t)=0$, and the orange
points are their intersections with the slice $y=1$.  Equivalently, these
three orange points correspond to the roots of
\[
P_6(t)=F_5(1,t)=t^3+16t^2+16t+1.
\]
In the 2D plot, the dashed red line is the slice $y=1$, together with the orange intersection 
points.}
\label{fig:F5-3d}
\end{figure}

\section{Overview of the Co-Math proof development}

Co-Mathematician is an AI work platform  for mathematical research, designed
to support open-ended projects through iterative workflows including
strategy formation, literature search, computational exploration, and proof
development \cite{ZhengEtAl2026}. It divides the problem into
workstreams and provides mathematical artifacts such as strategy plans, code snippets, computational evidence, and detailed latex reports on the outcome of each workstream. 

For our problem, we used Co-Mathematician iteratively, creating 3 workstreams with human feedback to the system between each run. We illustrate the geometry behind the strategy through the $m=5$ case in Figure \ref{fig:F5-3d}.

The first workstream already found the key strategy of the proof.  The problem was
reformulated as an interlacing statement and the bivariate
deformation $F_m(y,t)$ was introduced. The roots of $P_n(t)$ were
interpreted as the crossing times of the positive $y$-roots of $F_{n-1}(y,t)$
with the slice $y=1$.  So the key idea was already present: prove a
Sturm-type theorem for $F_m(\,\cdot\,,t)$ at fixed $t<0$, then study and describe how these roots move as $t$ varies. In this first proof attempt, however, several points were not yet rigorously proven, rather they were justified computationally. Most importantly,  the recurrence \eqref{eq:Frec}
was already proposed, but its relation with the original recurrence for $P_n(t)$
was not rigorously proved. The missing parts were: a) that the
generating function of the $F_m$'s satisfies the PDE \eqref{eq:PhiPDE}, and b) 
that this PDE implies the two identities
\eqref{eq:slice} and \eqref{eq:slope}.  

After asking for these clarifications, in its second attempt Co-Mathematician made the algebraic explanation more precise. It introduced the generating
function $U(x,t)$ and the differential equation \eqref{eq:Uode} was written
down explicitly.  The exponential generating function $\Phi(x,y,t)$ was shown to satisfy \eqref{eq:PhiPDE} and that, conversely, coefficient extraction
from \eqref{eq:PhiPDE} gives exactly the recurrence \eqref{eq:Frec}.  Solving
the PDE on the slice $y=1$ resulted in \eqref{eq:slice}, and differentiating the
PDE in the $y$-direction at $y=1$ gave \eqref{eq:slope}.  Thus the
deformation was no longer just an ansatz: it was shown to be compatible with
the recurrence defining the $P_n(t)$.
In this second attempt (workstream), finite symbolic checks verified, for small values of $m$, that the recurrence \eqref{eq:Frec}
produces polynomials satisfying the expected identities at $y=1$, and that
the derivative at $y=1$ gives the correct convolution term.  Co-Mathematician performed detailed root computations, which suggested that, for fixed $t<0$, the roots in the
$y$-variable are real and lie in the interval $0<y<1-t$.  These numerical checks turned out to be useful for finding the correct signs, indices, and normalizations.  They did
not, however, prove the identities for all $m$, nor did they prove the behavior of the roots as functions of $t$. 

The third Co-Mathematician workstream turned these topological arguments, based on computational evidence, into rigorous proof. The fixed-$t$ root statement (stated as Proposition \ref{prop:fixedt}) was proved by rewriting the recurrence in
Sturm form.  After multiplying by the positive weight $w_m(y)$, the recurrence
becomes the weighted derivative identity \eqref{eq:SturmF}.  Rolle's theorem
then gives an induction argument which proves that $F_m(\,\cdot\,,t)$ has only simple real
roots, with the positive roots lying in $0<y<1-t$. The final topological ingredient was the crossing argument of the root branches. The first two workstreams by Co-Mathematician implicitly used
monotonicity of the branches $r_i(t)$, but no such monotonicity was proved. Interestingly, the final attempt dropped the monotonicity argument, which is not needed, and followed the degree count strategy instead as follows: by the endpoint analysis,
each positive branch $r_i(t)$ lies below $y=1$ for $t$ close to $0^-$ (Lemma \ref{lem:nearzero}), 
and above $y=1$ for $t\ll0$ (Lemma \ref{lem:minusinfty}). Hence each branch crosses $y=1$ at least
once.

These crossing times are precisely the zeros of $ F_{n-1}(1,t)=P_n(t)$. 
Since $P_n(t)$ has degree equal to the number of positive branches, there can
be no additional crossings.  Thus each branch crosses the slice exactly once,
and all zeros of $P_n(t)$ are real and negative.  The same ordered-branch
picture then gives the sign pattern needed for strict interlacing.

In summary, in this work the Co-Mathematician system was useful for proposing a proof strategy, testing it in examples, producing algebraic manipulations, and iteratively improving mathematical rigour. This work also shows that careful human review is essential in the process.

The handling of references and literature is less organised: we experienced that Co-Mathematician changed the reference list from one attempt to another, and some background literature (including the Keel--Manin--Getzler results and the related work of Choi--Kiem--Lee) was not tracked consistently.

Our work experience suggests three practical requirements for future agentic workflows in mathematics. First, the system should keep a reference record separating sources used in the proof from background literature and related work. Second, the system should maintain a clear curated list of progress: which lemmas have been proved, which are supported only by computation, and which still require an argument. We believe that, in our work, the system's trust in its own computational evidence played a central role in its success, helping the system to build the overall strategy. This feature helped to preserve a promising mathematical idea through several incomplete versions until the experimental evidence had been replaced by proof. However, the system must record computational evidence precisely, including the identities checked, the values of parameters used, and the numerical ranges tested.

\bibliographystyle{amsalpha}
\bibliography{references}

\end{document}